\begin{document}

\newtheorem{theorem}{Theorem}[section]
\newtheorem{tha}{Theorem}
\newtheorem{conjecture}[theorem]{Conjecture}
\newtheorem{corollary}[theorem]{Corollary}
\newtheorem{lemma}[theorem]{Lemma}
\newtheorem{claim}[theorem]{Claim}
\newtheorem{proposition}[theorem]{Proposition}
\newtheorem{construction}[theorem]{Construction}
\newtheorem{definition}[theorem]{Definition}
\newtheorem{question}[theorem]{Question}
\newtheorem{problem}[theorem]{Problem}
\newtheorem{remark}[theorem]{Remark}
\newtheorem{observation}[theorem]{Observation}

\newcommand{\ex}{{\mathrm{ex}}}

\newcommand{\EX}{{\mathrm{EX}}}

\newcommand{\AR}{{\mathrm{AR}}}

\def\endproofbox{\hskip 1.3em\hfill\rule{6pt}{6pt}}
\newenvironment{proof}%
{%
\noindent{\it Proof.}
}%
{%
 \quad\hfill\endproofbox\vspace*{2ex}
}
\def\qed{\hskip 1.3em\hfill\rule{6pt}{6pt}}

\newcommand{\norm}[1]{\lVert#1\rVert}
\def\ce#1{\lceil #1 \rceil}
\def\fl#1{\lfloor #1 \rfloor}
\def\lr{\longrightarrow}
\def\e{\epsilon}
\def\cB{{\cal B}}
\def\cC{{\cal C}}
\def\cF{{\cal F}}
\def\cG{{\cal G}}
\def\cH{{\cal H}}
\def\ck{{\cal K}}
\def\cI{{\cal I}}
\def\cJ{{\cal J}}
\def\cL{{\cal L}}
\def\cM{{\cal M}}
\def\cP{{\cal P}}
\def\cQ{{\cal Q}}
\def\cS{{\cal S}}
\def\cT{{\cal T}}
\def\imp{\Longrightarrow}
\def\1e{\frac{1}{\e}\log \frac{1}{\e}}
\def\ne{n^{\e}}
\def\rad{ {\rm \, rad}}
\def\equ{\Longleftrightarrow}
\def\pkl{\mathbb{P}^{(k)}_l}
\def\podd{\mathbb{P}^{(k)}_{2t+1}}
\def\peven{\mathbb{P}^{(k)}_{2t+2}}
\def\wt{\widetilde}
\def\wh{\widehat}

\def\b{\mathbb{B}}
\def\wb{\widetilde{\b}_n}

\def\band{[\frac{n}{2}-{2\sqrt{n\ln n}}, \, \frac{n}{2}+{2\sqrt{n\ln n}}]}
\def \gap{{2\sqrt{n\ln n}}}

\voffset=-0.5in

\title{Set families with a forbidden induced subposet}
\author{
Edward Boehnlein\thanks{Dept. of Mathematics, Miami University, Oxford, OH 45056, E-mail: boehnlel@muohio.edu.}
 \quad and \quad
Tao Jiang\thanks{Dept. of Mathematics, Miami University, Oxford,
OH 45056, USA. E-mail: jiangt@muohio.edu. }  }

\date{June 12, 2011}

\maketitle

\begin{abstract}
For each poset $H$ whose Hasse diagram is a tree of height $k$, we
show that the largest size of a family $\cF$ of
subsets of $[n]=\{1,\ldots, n\}$ not containing $H$ as an induced
subposet is asymptotic to
$(k-1){n\choose \fl{n/2}}$. This extends the result of Bukh
\cite{bukh}, which in turn generalizes several known results
including Sperner's theorem. \footnote{posted on arxiv on June 12, 2011.}
\end{abstract}


\medskip


\section{Introduction}

A poset $G=(S,\leq)$ is a set $S$ equipped with a partial ordering $\leq$. We say that a poset $G=(S,\leq)$ contains
another poset $H=(S',\leq')$ as a {\it subposet} and write $H\subseteq G$
if there exists an injection $f: S'\to S$ such that $\forall u,v\in S' $ if $u\leq' v$ then $f(u)\leq f(v)$.
We say that $G=(S,\leq)$ contains $H=(S',\leq')$ as an {\it induced subposet} and write $H\subseteq^* G$  if there exists
an injection $f:S'\to S$ such that $\forall u,v\in S' \, u\leq' v $ if and only if $f(u)\leq f(v)$.

Given a positive integer $n$, let $[n]=\{1,2,\ldots, n\}$. The Boolean lattice $\mathbb{B}_n$  of order $n$ is the poset
$(2^{[n]},\subseteq)$. Throughout this paper, we automatically equip any family $\cF\subseteq 2^{[n]}$ with
the containment $\subseteq$ relation and thus view $\cF$ as a subposet of $\b_n$. Given a positive integer $n$ and
a  poset $H$,
let $La(n,H)$ denote the largest size of a family $\cF\subseteq \mathbb{B}_n$ that does not contain $H$ as a subposet.
Let $La^*(n,H)$ denote
the largest size of a family $\cF\subseteq \mathbb{B}_n$ that does not contain $H$ as an induced subposet.
The study of these functions dates back
to Sperner's theorem \cite{sperner} which asserts that the largest size of an antichain in the Boolean lattice of order $n$
equals ${n\choose \fl{n/2}}$, with equality attained by taking the middle level of the boolean lattice.
If we use $P_2$ to denote a chain of two elements, then Sperner's theorem says that $La(n,P_2)=La^*(n,P_2)={n\choose
\fl{n/2}}$.  Erd\H{o}s \cite{erdos} extended Sperner's theorem to show that $La(n,P_k)$, where $P_k$ is the chain of $k$ elements,
is the sum of the $k-1$ middle binomial coefficients in $n$ (i.e. the sum of the sizes of the middle $k-1$ levels of $\mathbb{B}_n$).
Consequently, $\lim_{n\to \infty} \frac{La(n,P_k)}{{n\choose \fl{n/2}}}=k-1$.

A systematic study of $La(n,H)$ started a few years ago, and  a series of results on $La(n, H)$ were developed.
In most of these results  $H$ is a  poset whose Hasse diagram is a tree or is a height-$2$ poset, where the {\it height} of $H$ is the largest cardinality
of a chain in $H$. We give a brief recount of some of these results. Let $V_k$ denote the the height-$2$ poset that consists of $k+1$ elements
$A, B_1,\ldots, B_k$ where $\forall i\in [k],  A\leq B_i$.  We call $V_r$ the {\it $r$-fork}.
Improving  earlier results of Thanh \cite{thanh}, De Bonis and Katona \cite{BK-fork} showed
$ La(n,V_k)= {n\choose \fl{n/2}}(1+\frac{k-1}{n}+\Theta(\frac{1}{n^2}))$.
Let $B$ denote the Butterfly poset on four elements $A_1,A_2, B_1,B_2$ where $\forall i,j\in [2], A_i\leq B_j$. De Bonis, Katona, and
Swanepoel \cite{BKS} showed that $La(n,B)={n\choose \fl{n/2}}+{n\choose \fl{n/2}+1}$. More generally, for $r,s\geq 2$ let $K_{r,s}$ denote the two-level poset
consisting of elements $A_1,\ldots, A_r, B_1,\ldots, B_s$ where $\forall i\in [r], j\in [s], A_i\leq B_j$. De Bonis and Katona \cite{BK-fork} showed
that $La(n,K_{r,s})\sim 2{n\choose \fl{n/2}}$, as $n\to\infty$. Extending ealier results on tree-like posets, Griggs and Lu \cite{griggs-lu} showed that if $T$ is any height-$2$ poset whose Hasse diagram is a tree, then $La(n,T)\sim {n\choose \fl{n/2}}$.  Independently,  Bukh \cite{bukh} obtained the following  more general result.

\begin{theorem}{\bf (Bukh \cite{bukh})} \label {bukh}
If $H$ is a finite poset whose Hasse diagram is a tree of height $k\geq 2$, then
$$La(n,H)=(k-1){n\choose \fl{n/2}}(1+O(1/n)).$$
\end{theorem}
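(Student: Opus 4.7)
The lower bound is immediate: let $\cF$ be the union of the $k-1$ levels of $\b_n$ whose sizes lie closest to $n/2$. Then $|\cF|=(k-1){n\choose \fl{n/2}}(1+O(1/n))$, and $H\not\subseteq \cF$ because every chain in $\cF$ has at most $k-1$ elements while $H$ contains a chain of $k$ elements.

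For the upper bound, the natural tool is the \emph{Lubell mass}
\[
\lambda(\cF) := \sum_{F\in\cF}{n\choose |F|}^{-1} = E\bigl[\,|C\cap \cF|\,\bigr],
\]
where $C$ is a uniformly random maximal chain in $\b_n$. Since $\lambda(\cF)\ge |\cF|/{n\choose \fl{n/2}}$, the theorem follows once we establish
\[
H\not\subseteq \cF \quad\Longrightarrow\quad \lambda(\cF)\le k-1+O(1/n),
\]
with the implicit constant depending only on $H$.

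I would prove this implication by induction on the number of vertices of $H$. Pick a leaf $v_0$ of the Hasse tree of $H$ with unique Hasse neighbor $u$; say $v_0<u$ (the case $v_0>u$ is dual). Let $H'=H-v_0$. If $H'$ has height $k-1$, the inductive hypothesis applied to $H'$ already gives $\lambda(\cF)\le k-2+O(1/n)$, which is stronger than needed. Otherwise $H'$ still has height $k$. Because $H\not\subseteq \cF$, no embedding $\phi$ of $H'$ into $\cF$ can have $\phi(u)$ strictly containing some $F_0\in\cF$; that is, every such $\phi$ must map $u$ to a \emph{minimal} element of $\cF$. One then argues, via the Sperner-type observation that minimal elements near the middle level of $\b_n$ carry only $O(1/n)$ Lubell mass per ``extension slot'' they would otherwise fill, to conclude that $\lambda(\cF)$ can exceed the $H'$-bound by at most $O(1/n)$.

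\textbf{Main obstacle.} The delicate part is the inductive bookkeeping. Different embeddings of $H'$ may share the image $\phi(u)$, so the missing extensions cannot simply be summed without double-counting; instead, each $F\in\cF$ must be weighted by its chain-probability $1/{n\choose |F|}$. Moreover, the $O(1/n)$ slack must remain $O(1/n)$ after $|V(H)|$ applications of the induction, which forces the inductive hypothesis to be stated in a slightly stronger form than the bare statement of the theorem, for instance tracking the distribution of $\cF$ across the levels of $\b_n$. The tree structure of $H$ is essential here: removing a leaf is a local operation preserving the tree property, so the induction can be iterated cleanly; no such reduction is available for Hasse diagrams with cycles.
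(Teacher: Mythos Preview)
The paper does not actually prove this theorem; it is quoted from Bukh, with only the key ingredients reproduced in Section~2 (Lemmas~\ref{saturated}--\ref{many-marked-chains}). Still, your proposal can be compared against Bukh's argument as reflected there.

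The lower bound is correct and matches the standard construction.

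For the upper bound there are two genuine gaps.

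First, the case split on the height of $H'$ is mishandled. When $H'$ has height $k-1$ you claim that the inductive hypothesis gives $\lambda(\cF)\le k-2+O(1/n)$, but the inductive hypothesis only applies if $H'\not\subseteq\cF$, and nothing in the assumption $H\not\subseteq\cF$ forces that. Take $H=P_k$: then $H'=P_{k-1}$, and a $P_k$-free family will typically contain many copies of $P_{k-1}$.

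Second, and more seriously, the central implication is missing. Grant that every embedding of $H'$ sends $u$ to a minimal element of $\cF$ (modulo a small injectivity caveat). How does this bound $\lambda(\cF)$? Minimal elements form an antichain, so their total Lubell mass is at most $1$; but that says nothing about the mass carried by the rest of $\cF$. Your ``Sperner-type observation'' does not connect the structure of embeddings of $H'$ to the global quantity $\lambda(\cF)$, and your own ``Main obstacle'' paragraph concedes that the inductive statement is not yet in a form strong enough to iterate.

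Bukh's argument sidesteps both issues by changing the unit of induction. Instead of deleting a single leaf, he deletes a chain interval $I'$ so that $H\setminus I'$ is again $k$-saturated with a tree Hasse diagram (Lemma~\ref{interval-removal}); the height never drops, so the first problem disappears. And instead of reasoning about embeddings of $H'$, he counts $k$-\emph{marked chains}: pairs $(M,Q)$ with $M$ a full chain of $\b_n$ and $Q\subseteq M\cap\cF$ a $k$-chain. Lemma~\ref{many-marked-chains} shows that $|\cF|\ge(k-1+\epsilon)\binom{n}{\lfloor n/2\rfloor}$ forces at least $(\epsilon/k)\,n!$ such pairs, and the embedding of $H$ is then built by gluing portions of these $k$-marked chains along the interval decomposition. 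The Lubell mass enters only once, to count $k$-marked chains; after that the argument is purely combinatorial, and the $O(1/n)$ error term comes from a direct estimate rather than from an iterated induction.
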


Note that Bukh's result generalizes (in a loose sense) all prior results on posets whose Hasse diagram is a tree.
Furthermore, it also implies De Bonis and Katona's result   that $La(n,K_{r,s})\leq 2{n\choose \fl{n/2}}(1+O(\frac{1}{n}))$
for the following reason. Consider the three level poset
$H$ that consists of elements $A_1,\ldots, A_r, B$, $C_1,\ldots, C_t$ where $\forall i\in [r], A_i \leq B$ and
$\forall j\in [t], B\leq C_j$. By transitivity  $\forall i\in [r], j\in [t]$, $A_i\leq C_j$, and so
$H$ contains $K_{r,s}$ as a subposet. So, $La(n,K_{r,s})\leq La(n,H)\leq 2{n\choose \fl{n/2}}(1+O(1/n))$.

In this paper, we are concerned with finding (or avoiding, depending on the perspective) induced subposets in $\b_n$.
Generally speaking, induced subposets are harder to force,
since we need to enforce noncontainment as well as containment among corresponding members.
For instance, for a family $\cF\subseteq \b_n$ to contain the $2$-fork $V_2$ as an induced subposet, we need to
find three members of $A,B,C$ of $\cF$ satisfying $A\subseteq B, A\subseteq C$,
$B\not\subseteq C$, and $C\not\subseteq B$.
By comparison, for $\cF$ to contain $V_2$ just a subposet, we only need to ensure the existence of
$A,B,C\in \cF$ satisfying $A\subseteq B, A\subseteq C$.

Since a family $\cF\subseteq \b_n$ that doesn't contain $H$ as a subposet certainly doesn't contain $H$ as
an induced suposet, we always have $La^*(n,H)\geq La(n,H)$. In general, the determination of $La^*(n,H)$
seems to be harder than the determination of $La(n,H)$. The only result on $La^*(n,H)$ that we are aware of is due to
Carroll and Katona \cite{CK} who showed that ${n\choose \fl{n/2}}(1+\frac{1}{n}+\Omega(\frac{1}{n^2}))
\leq La^*(n,V_2)\leq {n\choose \fl{n/2}}(1+\frac{2}{n}+O(\frac{1}{n^2}))$.

\medskip

In this paper, we extend Bukh's result to establish an induced version of his result.

\begin{theorem}\label{main}
Let $H$ be a finite poset whose Hasse diagram is a tree of height $k\geq 2$. Then
$$La^*(n,H)=(k-1){n\choose \fl{n/2}}(1+o(1)).$$
\end{theorem}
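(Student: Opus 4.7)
The lower bound is immediate: take $\cF$ to be the union of $k-1$ consecutive levels of $\mathbb{B}_n$ centered at $\fl{n/2}$. Then $|\cF|=(k-1)\binom{n}{\fl{n/2}}(1+o(1))$ by unimodality of the binomial coefficients, and since $\cF$ contains no chain of length $k$ while $H$ has a chain of length $k$, $\cF$ contains no copy of $H$ in any form (induced or otherwise).

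For the upper bound, the plan is to adapt Bukh's proof of Theorem~\ref{bukh} to the induced setting, at the cost of a weaker $(1+o(1))$ error term. First, I would restrict attention to the middle band: by a standard Chernoff-type estimate, the number of subsets of $[n]$ with sizes outside $\band$ is only $o\!\left(\binom{n}{\fl{n/2}}\right)$, so it suffices to bound $|\cF\cap\wb|$, where $\wb$ denotes the subsets of $[n]$ whose sizes lie in $\band$. Next, following Bukh's Lubell-style setup, I would consider the function $\ell(\cG)=\sum_{F\in\cG}1/\binom{n}{|F|}$; the goal is to show that for $\cF$ avoiding $H$ as induced one has $\ell(\cF\cap\wb)\le k-1+o(1)$, which by LYM yields $|\cF\cap\wb|\le(k-1+o(1))\binom{n}{\fl{n/2}}$ and hence the theorem.

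To prove the Lubell bound I would use Bukh's double-chain counting argument. A uniformly random maximal chain in $\mathbb{B}_n$ meets $\cF$ in $\ell(\cF)$ elements in expectation; if $\ell(\cF\cap\wb)>k-1+\epsilon$ then a positive fraction of maximal chains each contain at least $k$ members of $\cF$, giving a ``trunk'' chain $T=\{T_1\subsetneq\cdots\subsetneq T_k\}\subseteq\cF\cap\wb$. Using $T$ as scaffolding, I would then grow an embedding of $H$ vertex by vertex, assigning a root-to-leaf path of $H$ to $T$ and each remaining vertex of $H$ to a set of $\cF$ branching off $T$ at the appropriate level. By supersaturation, there will be many admissible branching configurations compatible with a trunk of this type.

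\textbf{Main obstacle.} The crux is enforcing the induced property while branching: each newly added branch vertex $v$ of $H$ must be assigned a set in $\cF$ that is comparable to its tree-ancestors (automatic from the trunk) but \emph{incomparable} in $\mathbb{B}_n$ to every previously-embedded cousin of $v$ in $H$. The middle-band restriction is what makes this controllable: inside $\wb$, for any fixed set $A$, the number of sets $B\in\wb$ comparable to $A$ is a lower-order fraction of $|\wb|$. Hence a pigeonhole/Ramsey argument over the supersaturated family of admissible branchings should produce at least one branching in which all required cousin-incomparabilities hold simultaneously, completing the induced embedding of $H$. The weaker $(1+o(1))$ error (compared with Bukh's $O(1/n)$) is the price paid for the Chernoff-based middle-band reduction and the Ramsey-style union bound in the supersaturation step; tightening either to polynomial accuracy would likely recover the $O(1/n)$ term.
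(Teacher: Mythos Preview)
Your lower bound and the reduction to the middle band $\wb$ match the paper exactly, and your identification of the main obstacle---enforcing cousin-incomparabilities while branching---is on target. The intuition that comparabilities are rare inside $\wb$ is also correct; it is essentially the content of the paper's Lemma~\ref{chain-into-lower-zone}.

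The gap is in the sentence ``by supersaturation, there will be many admissible branching configurations \ldots\ hence a pigeonhole/Ramsey argument \ldots\ should produce at least one branching.'' This is where Bukh's argument genuinely breaks for the induced problem, and your proposal does not supply a replacement. In Bukh's non-induced proof, the supersaturation you want comes for free: any full chain meeting $\cF$ in $\ge |V(H)|$ places already contains $H$ as a (non-induced) subposet, so such chains can be discarded, leaving only chains with a \emph{bounded} number of $\cF$-members; this bound is what drives his counting. In the induced setting no such discard is available---a chain can hit $\cF$ arbitrarily often without containing an induced copy of $H$---so you have no a~priori control on $|M\cap\cF|$, and the naive Lubell/pigeonhole count of branching options collapses. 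Concretely, a trunk vertex $v$ may lie on many $k$-marked chains all of which pass through the forbidden neighborhood $D^*(v,S)$ of the already-embedded cousins $S$; nothing in your outline rules this out.

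The paper's fix is substantial and is the real content of the argument. It introduces a notion of a vertex being \emph{$d$-bad} (every $k$-marked chain through it in position $d$ hits the forbidden neighborhood of some small set), proves an exponential-decay estimate (Lemma~\ref{exponential}) showing that long ``bad strings'' along a random chain occur with probability $(O(\sqrt{\ln n/n}))^p$, and uses this to build a nested sequence $\cL_1\supseteq\cdots\supseteq\cL_h$ of dense families of $k$-marked chains in which every member of $\cL_{i+1}$ is \emph{good} relative to $\cL_i$ (Theorem~\ref{nested}). The embedding of $H$ then proceeds by reverse induction along Bukh's interval-removal decomposition, using goodness (Proposition~\ref{disjoint}) at each step to guarantee a branch avoiding the forbidden neighborhood. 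Your ``pigeonhole/Ramsey'' placeholder would have to be replaced by something of this strength; as written, the proposal does not close the gap.
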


For a lower bound on $La^*(n,H)$, let $\cF$ consist of the middle $k-1$ levels of the Boolean lattice $B_n$.
Clearly $\cF$ doesn't contain $H$ (as an induced subposet) and $|\cF|={n\choose \fl{n/2}}(1-O(1/n))$.
So $La^*(n,H)\geq {n\choose \fl{n/2}}(1-O(1/n))$. The upper bound follows from the following more specific statement.

\begin{theorem}\label{upper}
Let $H$ be a finite poset whose Hasse diagram is a tree of height $k\geq 2$.  Let $\epsilon$ be a small positive real.
Let $n$ be a sufficiently large  positive
integer depending on $H$ and $\epsilon$. Let $\cF\subseteq \b_n$ be a family with $|\cF|\geq (k-1+\epsilon){n\choose \fl{n/2}}$.
Then $\cF$ contains $H$ as an induced subposet.
\end{theorem}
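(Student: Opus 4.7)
The plan is to adapt Bukh's proof of Theorem~\ref{bukh} by combining an LYM-style averaging over random maximal chains in $\b_n$ with a cleanup step, and then replacing his non-induced embedding by a greedy induced embedding of $H$. Since sets outside the band $\band$ contribute only $o\bigl(\binom{n}{\fl{n/2}}\bigr)$ to any subfamily of $\b_n$, I may restrict $\cF$ to the middle band while keeping $|\cF|\ge (k-1+\e/2)\binom{n}{\fl{n/2}}$. For a uniformly random maximal chain $\cC$ in $\b_n$,
\[
\mathbb{E}\bigl[|\cC\cap\cF|\bigr]=\sum_{A\in\cF}\frac{1}{\binom{n}{|A|}}\ge k-1+\tfrac{\e}{2},
\]
so a non-negligible fraction of maximal chains meet $\cF$ in at least $k$ sets.

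I then iterate a cleanup, discarding each $A\in\cF$ that lies on too few chains meeting $\cF$ in $\ge k$ sets, or that has too few proper supersets (resp.\ subsets) in $\cF$ at each middle-band rank. With thresholds chosen so the total loss is $o\bigl(\binom{n}{\fl{n/2}}\bigr)$, a subfamily $\cF'$ remains with $|\cF'|\ge (k-1+\e/4)\binom{n}{\fl{n/2}}$ in which every $A\in\cF'$ has many proper supersets and subsets in $\cF'$ at each middle-band rank.

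For the embedding, fix a longest chain $h_1\lessdot\cdots\lessdot h_k$ in $H$, root the Hasse tree at $h_1$, and enumerate its vertices $v_1,\dots,v_t$ by a DFS that first traverses this chain, so $v_j=h_j$ for $1\le j\le k$ and each $v_i$ ($i>1$) is tree-adjacent to some earlier $v_{p(i)}$. Pick a maximal chain $\cC$ with $|\cC\cap\cF'|\ge k$ and set $\phi(v_1)\subsetneq\cdots\subsetneq\phi(v_k)$ from $\cC\cap\cF'$. For $i>k$, WLOG assume $v_{p(i)}\lessdot v_i$ (the opposite case is symmetric). Any earlier $v_j$ is either an ancestor of $v_i$ or in a sibling subtree completed before $v_i$, so the Hasse-tree path from $v_j$ to $v_i$ passes through $v_{p(i)}$ and terminates with the upward edge to $v_i$; hence $v_j\le v_i$ in $H$ iff $v_j\le v_{p(i)}$ (automatically preserved by induction), while $v_i\le v_j$ never occurs. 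The only new requirement is thus that $\phi(v_i)\in\cF'$ properly contain $\phi(v_{p(i)})$ and be incomparable with $\phi(v_j)$ for each $v_j$ incomparable with $v_i$ in $H$.

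The main obstacle is enforcing these incomparabilities. For each offending $v_j$, a bad $B\supsetneq\phi(v_{p(i)})$ either satisfies $B\supseteq\phi(v_j)$ or $B\subseteq\phi(v_j)$. In the first case $B$ must contain the nonempty set $\phi(v_j)\setminus\phi(v_{p(i)})$, and among supersets of $\phi(v_{p(i)})$ lying in the middle band, the fraction containing any fixed additional element is $O\bigl(\sqrt{(\ln n)/n}\bigr)\to 0$. In the second case $B$ lies in the interval $[\phi(v_{p(i)}),\phi(v_j)]$, which contains at most $2^{|\phi(v_j)\setminus\phi(v_{p(i)})|}\le 2^{4\sqrt{n\ln n}}=o\bigl(\binom{n}{\fl{n/2}}\bigr)$ sets. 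Summed over the at most $|V(H)|$ offending vertices, the total count of bad $B$'s is a vanishing fraction of the $\cF'$-supersets of $\phi(v_{p(i)})$ guaranteed by cleanup, so a valid $\phi(v_i)$ exists and the greedy procedure yields an induced copy of $H$ in $\cF$.
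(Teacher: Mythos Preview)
Your high-level plan (restrict to the middle band, LYM-average, then embed greedily along a DFS of the Hasse tree) is sound, and your analysis of which comparabilities must be enforced at each greedy step is correct. The genuine gap is in the cleanup step and in the subsequent counting of ``bad'' supersets.

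The claim that one can iterate a cleanup discarding only $o\bigl(\binom{n}{\lfloor n/2\rfloor}\bigr)$ sets while guaranteeing every surviving $A$ has ``many'' $\cF'$-supersets is not justified, and in fact fails. Take $k=2$, $H=V_2$, and let $\cF$ be all $\lfloor n/2\rfloor$-sets together with all $(\lfloor n/2\rfloor{+}1)$-sets containing the element~$1$; then $|\cF|=\tfrac32\binom{n}{\lfloor n/2\rfloor}$. Every $\lfloor n/2\rfloor$-set $A$ with $1\notin A$ has exactly one $\cF$-superset, namely $A\cup\{1\}$. Any cleanup threshold of $\ge 2$ supersets therefore removes $\tfrac12\binom{n}{\lfloor n/2\rfloor}$ sets, which is not $o\bigl(\binom{n}{\lfloor n/2\rfloor}\bigr)$; with threshold $1$ nothing is removed, but then your greedy embedding may start at such an $A$, set $\phi(b_1)=A\cup\{1\}$, and find no valid $\phi(b_2)$. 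More generally, your case-1 bound says only that an $O(\sqrt{(\ln n)/n})$ fraction of \emph{all} band supersets of $\phi(v_{p(i)})$ contain a fixed extra element; the embedding, however, needs a superset \emph{in $\cF'$}, and you never show that the bad sets among the $\cF'$-supersets are few, nor that the $\cF'$-supersets outnumber your case-2 absolute bound $2^{4\sqrt{n\ln n}}$. The comparison ``bad $B$'s are a vanishing fraction of the $\cF'$-supersets guaranteed by cleanup'' simply does not follow from what you have.

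The paper avoids this obstacle by never cleaning $\cF$ at all. It works instead with families of $k$-marked full chains and builds a nested sequence $\cL_1\supseteq\cdots\supseteq\cL_h$ (Theorem~\ref{nested}) in which every member of $\cL_{i+1}$ is \emph{good} relative to $\cL_i$: for each vertex $v$ on such a marked chain at depth $d$, and for every candidate obstruction set $S$ with $|S|\le h$, some marked chain in $\cL_i(v,d)$ avoids the forbidden neighbourhood $D^*(v,S)$ (or $U^*(v,S)$). The crucial estimate is Lemma~\ref{chain-into-lower-zone}, which bounds the probability that a \emph{random full chain} of $D(v)$ enters a forbidden neighbourhood by $O(h\sqrt{(\ln n)/n})$; this is a statement about chains, not about individual sets, and it is precisely what lets the iterated refinement of the $\cL_i$ lose only a negligible fraction of marked chains at each step. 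The induced copy of $H$ is then grown one chain-interval at a time, each new interval being supplied by the next family in the nested sequence.
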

To prove theorem \ref{upper}, we first make a quick reduction. As mentioned in \cite{griggs-lu}, using Chernoff's inequality,
it is easy to show that the number of sets $F\in 2^{[n]}$ satisfying $||F|-\frac{n}{2}|>2\sqrt{n\ln n}$ is as most $O(\frac{1}{n^{3/2}}{n\choose \fl{n/2}})$.
Define
$$\wb=\{v\in \b_n: |v|\in
\band\}.$$
 By our  discussion above, there are only $o({n\choose \fl{n/2}})$ members of $\b_n$ that lie outside $\wb$. So to prove Theorem \ref{upper} it suffices to prove

\begin{theorem}\label{upper2}
Let $H$ be a finite poset whose Hasse diagram is a tree of height $k\geq 2$.  Let $\epsilon$ be a small positive real.
Let $n$ be a sufficiently large  positive
integer depending on $H$ and $\epsilon$. Let $\cF\subseteq \wb$ be a family with $|\cF|\geq (k-1+\epsilon){n\choose \fl{n/2}}$.
Then $\cF$ contains $H$ as an induced subposet.
\end{theorem}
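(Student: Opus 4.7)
Following Bukh's strategy, the plan is to combine chain averaging with a careful choice of target sizes inside $\wb$ to enforce the induced embedding. First, I would apply the standard LYM-type averaging: for a uniformly random maximal chain $\cC$ of $\b_n$, one has $\mathbb{E}[|\cC \cap \cF|] = \sum_{A \in \cF}\binom{n}{|A|}^{-1} \geq |\cF|/\binom{n}{\fl{n/2}} \geq k-1+\e$, so a positive proportion of maximal chains contain at least $k$ elements of $\cF$. Correspondingly, fix $k$ target sizes $s_1 < s_2 < \cdots < s_k$ inside $\band$, evenly spaced with common gap $g=\Theta(\sqrt{n\ln n})$, and attach to each vertex $v$ of the Hasse-tree of $H$ the level $h(v)$ equal to the length of the longest chain of $H$ ending at $v$. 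I aim for an injection $f:H\to \cF$ with $|f(v)|=s_{h(v)}$ for every $v$; since $u<_H v \Rightarrow h(u)<h(v)$, these assignments are compatible with the containments that $H$ prescribes.

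The central observation that makes the induced case accessible is that accidental comparability between sets at two different target levels is exponentially rare. For $i<j$ and a uniformly random pair $(A,B)\in \binom{[n]}{s_i}\times\binom{[n]}{s_j}$, the probability $\Pr[A\subseteq B]=\binom{n-s_i}{s_j-s_i}/\binom{n}{s_j}$ is exponentially small in $n$, because $g=\Theta(\sqrt{n\ln n})$ while $\binom{n}{s_j}$ is $2^{n(1-o(1))}$. Thus almost every weak (subposet) embedding of $H$ into $\cF$ respecting the prescribed size specifications is actually induced, and it suffices to produce a single weak embedding with these specifications.

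The bulk of the proof would then produce such a weak embedding by adapting Bukh's double-counting argument, working inside the sub-lattice formed by the $k$ target levels and peeling one leaf of the Hasse-tree of $H$ at a time. At each step, conditional on the partial embedding, a secondary LYM-type averaging on the slab at the next target level should yield many valid extensions, of which the overwhelming majority are induced extensions by the comparability estimate above. The main obstacle is connecting the global density hypothesis $|\cF|\geq(k-1+\e)\binom{n}{\fl{n/2}}$ to the level-by-level density needed to drive the peeling. I would handle this by averaging over choices of the $k$ target sizes inside $\band$ and showing that some valid choice yields total weighted density at least $k-1+\e-o(1)$; keeping the cumulative $o(1)$ losses from each peeling step from eroding the initial $\e$ slack is where the technical heart of the proof lies.
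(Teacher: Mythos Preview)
Your plan has a genuine gap at its central step. The estimate
\[
\Pr[A\subseteq B]=\binom{n-s_i}{s_j-s_i}\Big/\binom{n}{s_j}
\]
is a statement about a \emph{uniformly random} pair $(A,B)$ at the two target levels, and it says nothing about pairs drawn from the adversarial family $\cF$. When you peel a leaf $v$ with parent $u$ (already embedded as $f(u)$ of size $s_{d+1}$), the ``valid weak extensions'' you need are sets $A\in\cF$ with $|A|=s_d$ and $A\subset f(u)$. These live in the tiny sublattice of $s_d$-subsets of a single $s_{d+1}$-set, and $\cF$ restricted to that sublattice can be empty, or can consist entirely of sets comparable to previously embedded vertices. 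Your secondary LYM averaging controls only the \emph{global} density of $\cF$ at level $s_d$; it gives no lower bound on the number of weak extensions below a specific $f(u)$, and even if there are many, nothing prevents all of them from lying inside (or above) some other embedded vertex. So neither ``many valid extensions'' nor ``overwhelming majority are induced'' follows from the hypotheses you have in hand.

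The paper's proof is organized quite differently and is built precisely to close this gap. It never fixes target sizes. Instead it works with $k$-marked chains $(M,Q)$ and, for each embedded vertex $v$ and each small set $S$ of already-embedded vertices, introduces a \emph{forbidden neighborhood} $D^*(v,S)$ (or $U^*(v,S)$) inside $\wb$. The key quantitative input (Lemmas~3.1--3.2) is that a random full chain of $D(v)$ hits $D^*(v,S)$ with probability $O(h\sqrt{\ln n}/\sqrt{n})$; this is the correct analogue of your comparability estimate, but it is phrased so that it applies to chains through a \emph{fixed} $v$ against a \emph{fixed} $S$, which is exactly the situation in the peeling step. The paper then iteratively discards ``bad'' vertices---those $v$ for which \emph{every} marked chain through $v$ enters some forbidden neighborhood---and shows via an exponential-decay argument (Lemmas~4.2--4.3 and Theorem~5.1) that this pruning costs only $o(n!)$ marked chains at each of $h$ rounds. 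The resulting nested families $\cL_1\supseteq\cdots\supseteq\cL_h$ guarantee that at every peeling step the current anchor $u$ is ``good'', so some marked chain through $u$ avoids the forbidden neighborhood of the previously embedded vertices, supplying an induced extension. Your proposal lacks any mechanism of this kind to certify that extensions below a specific embedded vertex exist and can avoid unwanted comparabilities.
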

For the rest of the paper, we prove Theorem \ref{upper2}.

\section{Preliminaries}

In this section, we recall some facts in \cite{bukh} which will be used in our main arguments.
Given a poset $H$, let $D(H)$ denote its Hasse diagram.
We call a poset $H$  {\it $k$-saturated} if every maximal chain is of length $k$. Thus, in particular, $H$ has height $k$.

\begin{lemma} \label {saturated} {\bf (\cite{bukh})}
If $H$ is a finite poset with $D(H)$ being a tree of height $k$, then $H$ is an induced subposet of some saturated
finite poset $\wt{H}$ with $D(\wt{H})$ being a tree of height $k$.
\end{lemma}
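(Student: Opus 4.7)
The plan is to produce $\tilde H$ by padding $H$ with chains so that every maximal chain reaches length exactly $k$, while keeping the Hasse diagram a tree and $H$ an induced subposet. For each $x\in H$, let $u(x)$ denote the length of the longest chain in $H$ ending at $x$, so $u(x)\in\{1,\dots,k\}$ with $u(x)=1$ precisely when $x$ is minimal in $H$. For every cover $x\lessdot y$ in $H$ one has $u(y)\geq u(x)+1$, but equality need not hold: $y$ may cover several elements and only one of them realizes the max in $u(y)=1+\max_{z\lessdot y} u(z)$. Using a single globally consistent level function on $D(H)$ would be tempting, but in general its range can exceed $k$ once the tree zig-zags, so instead I let $u$ itself guide the construction.

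I would build $\tilde H$ in two padding steps. First, for every cover $x\lessdot y$ of $H$ with $u(y)-u(x)\geq 2$, subdivide the corresponding edge of $D(H)$ by inserting new elements forming a chain $x<z_{u(x)+1}<\dots<z_{u(y)-1}<y$. Second, for every maximal element $y$ of $H$ with $u(y)<k$, attach above $y$ a new chain $y<y'_{u(y)+1}<\dots<y'_k$. Extend $u$ to $\tilde H$ by declaring $u(z_i)=u(y'_i)=i$; a direct count shows this equals the length of the longest chain in $\tilde H$ ending at the new element.

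Three properties then need checking. (a) $D(\tilde H)$ is obtained from $D(H)$ by subdividing edges and attaching pendant paths, both of which preserve treeness. (b) By construction, every cover $a\lessdot b$ of $\tilde H$ satisfies $u(b)=u(a)+1$, and every minimal (resp.\ maximal) element of $\tilde H$ has $u=1$ (resp.\ $u=k$); hence every maximal chain of $\tilde H$ has exactly $k$ elements. (c) $H$ is an induced subposet of $\tilde H$, i.e., no new comparabilities among elements of $H$ are introduced. The main obstacle is (c): one must rule out ``shortcut'' relations created by the new elements. The key observation is that any comparability $p\leq q$ in $\tilde H$ between $p,q\in H$ is witnessed by a monotone upward path in the Hasse tree of $\tilde H$, and such a path cannot enter a terminal extension element $y'_i$ (those sit above an $H$-maximal element with no way back down into $H$); so the path uses only elements of $H$ and subdivision elements, and collapsing each subdivision chain back to the single $H$-cover it replaced yields an actual chain witnessing $p\leq q$ in $H$.
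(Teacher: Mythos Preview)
Your argument is correct. Note that the paper does not supply its own proof of this lemma---it is quoted from \cite{bukh}---so there is nothing in the present paper to compare against directly. Your construction (subdivide each cover $x\lessdot y$ with $u(y)>u(x)+1$, then append a pendant chain above each maximal $y$ with $u(y)<k$, where $u(x)$ is the length of the longest chain ending at $x$) is a valid realization of the standard padding idea, and the verifications of (a)--(c) go through as you describe. In particular, the induced-subposet check (c) is handled correctly: a saturated upward chain in $\tilde H$ from $p\in H$ to $q\in H$ cannot enter a pendant extension (no element of $H$ lies above any $y'_i$), and any subdivision segment it traverses must enter at the bottom endpoint $x$ and exit at the top endpoint $y$ of the original $H$-cover (since each subdivision vertex has a unique upper and lower cover in $\tilde H$), so collapsing such segments recovers an $H$-chain witnessing $p\le_H q$. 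One minor remark: your side claim that the extended $u$ equals the longest-chain height in $\tilde H$ is not actually needed for the argument---all that is used is that $u$ increases by exactly $1$ along every cover of $\tilde H$ and takes the values $1$ and $k$ on minimal and maximal elements, which you verify directly.
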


Due to Lemma \ref{saturated}, for the rest of the paper, we will assume that $H$ is  $k$-saturated.
Let $H$ be a poset and $x,y\in H$ where $x\leq y$. Define $[x,y]=\{z\in H: x\leq z\leq y\}$ and call it
an {\it interval}.
An interval in $H$ that is a chain is called a {\it chain interval}. The statement we give below is equivalent to
the original one in \cite{bukh}.

\begin{lemma}\label {interval-removal} {\bf (\cite{bukh})}
Let $k\geq 2$. Suppose $H$ is a $k$-saturated poset that is not a chain and $D(H)$ is a tree.
There exists $v\in H$, which is a leaf in $D(H)$, and  a chain  interval $I=[v,u]$ or $[u,v]$ of length $|I|\leq k$ containing
$v$ such that $D(P\setminus I')$ is a tree and the poset $P\setminus I'$ is $k$-saturated, where $I'=I-\{u\}$.
\end{lemma}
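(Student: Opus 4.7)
The plan is to locate a leaf $v$ of $D(H)$ together with a comparable vertex $u$ on the same pendant path in the tree so that removing $I \setminus \{u\}$ leaves a tree whose induced poset is still $k$-saturated. The key easy observation is that because $D(H)$ is a tree, whenever $v \leq u$ in $H$ the interval $[v,u]$ coincides with the unique $v$-to-$u$ path in $D(H)$: any saturated chain in $H$ from $v$ to $u$ refines to a sequence of covering relations, and by uniqueness of tree paths this \emph{is} the tree path. Hence every such interval is automatically a monotone chain, and its length is at most $k$ since $H$ is $k$-saturated, so ``chain interval'' and ``$|I| \leq k$'' come for free; the real work is choosing $v$ and $u$ so that the removal behaves well.

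I would classify vertices of $D(H)$ into leaves, ``through'' vertices (Hasse-degree $2$ with one neighbor above and one below, so that $H$ is locally monotone there), and ``corner'' vertices, which are either branching (Hasse-degree $\geq 3$) or degree-$2$ local extrema (both Hasse-neighbors on the same side, forcing the vertex to be a min or max of $H$). Let $W$ denote the set of corner vertices; since $H$ is not a chain, $W \neq \emptyset$, for otherwise $D(H)$ would be a monotone path and $H$ a chain. Starting at any leaf $v$ and walking through through-vertices produces a unique pendant path $v = v_0, v_1, \ldots, v_r = u$ ending at a corner $u \in W$; each step is in the same direction, so $[v, u]$ (or $[u, v]$) is a chain of length $r + 1 \leq k$. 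After setting $I' = I \setminus \{u\}$, removing $I'$ from $D(H)$ deletes a pendant sub-path, so $D(H) - I'$ remains a tree; moreover no new cover relations can appear in $H \setminus I'$, because the tree structure forces every element of $H$ that is below (for an ascending pendant) some $v_j \in I'$ to itself lie in $I'$, and hence no pair $a, b \in H \setminus I'$ can have an element of $I'$ strictly between them.

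The main obstacle is preserving $k$-saturation at $u$ after the removal. If $u$ is itself a local extremum of $H$ --- say $v$ is a min-leaf and the pendant ascends monotonically, forcing $u$ to be a max of $H$ at the top of a length-$k$ maximal chain --- then the other Hasse-neighbors of $u$ all remain, and $k$-saturation of $H$ produces a length-$k$ descending chain through $u$ within $H \setminus I'$. If instead $u$ is a branching non-extremum reached from below, the maximal chains through $u$ that used the deleted pendant must reroute through some other below-neighbor of $u$, which forces the below-multiplicity $d_-(u) \geq 2$ (symmetrically $d_+(u) \geq 2$ when approached from above). To guarantee one of these good situations I would pick $u$ to be an \emph{outer} corner vertex: a $W$-vertex such that $D(H) - u$ has a component containing no other $W$-vertex, which exists since $D(H)$ is finite and $W$ is nonempty. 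Every pendant chain into such an outer component lands at a genuine $D(H)$-leaf. When $u$ is an extremum any such leaf works; when $u$ is branching of degree $\geq 3$, since $d_+(u) + d_-(u) \geq 3$ at least one of $d_+(u), d_-(u)$ is $\geq 2$, and a short check shows that this heavy side always carries at least one of $u$'s outer pendants, so choosing $v$ at the end of that pendant supplies the required multiplicity. The case analysis on extremum versus branching non-extremum, together with this heavy-side leaf choice, is where the argument concentrates.
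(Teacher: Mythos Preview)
The paper does not actually supply a proof of this lemma; it is quoted (in an equivalent form) from Bukh and used as a black box, so there is nothing in the paper to compare your argument against.  On its own merits, your overall plan is right: peel off a pendant monotone path ending at a corner $u$, and observe that $k$-saturation survives precisely when $u$ does not become a new minimal or maximal element of $H\setminus I'$.  The reduction of ``chain interval'' and ``$|I|\le k$'' to the tree structure is also correct.

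The gap is in the ``short check'' at the end.  Your notion of \emph{outer corner} only asks that \emph{some} component of $D(H)-u$ be corner-free, and for such a $u$ it is \emph{not} true that the heavy side ($d_\pm(u)\ge 2$) must carry an outer pendant.  Concretely, take $k=4$ and covering relations $v<u$, $u<a_i$, $e_i<d_i<a_i<c_i$ for $i=1,2$; this is a $4$-saturated tree poset whose corners are $u,a_1,a_2$.  Here $u$ is an outer corner (the component $\{v\}$ is corner-free), but $d_+(u)=2$, $d_-(u)=1$, and the \emph{only} corner-free component lies on the light (down) side.  Removing $I'=\{v\}$ turns $u$ into a new minimum and the chain $u<a_1<c_1$ has only $3$ elements, so $H\setminus I'$ is not $4$-saturated.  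The easy repair is to strengthen your choice: pick $u$ to be a \emph{leaf of the minimal subtree of $D(H)$ spanning $W$}, i.e.\ require that \emph{all but at most one} component of $D(H)-u$ be corner-free.  Such a $u$ exists since $W\neq\emptyset$, and now $u$ has at least $\deg(u)-1$ outward corner-free directions; a genuine short case check then shows one of them lies on a side with multiplicity $\ge 2$ (in the example above, $a_1$ is such a leaf, and removing $\{e_1,d_1\}$ works).
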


Fix a positive integer $k$. A $k$-chain in $\b_n$ is just a chain in $\b_n$ with $k$ distinct members.
 A {\it full chain} of a Boolean lattice $\b_m$ of order $m$ is just a chain of length $m+1$.
So it starts with the top element of the lattice and ends with bottom element of the lattice and contains a member of each cardinality between $0$ and $m$.
Let $\cF\subseteq \b_n$ be a family.
Given a $k$-chain  $Q=(F_1,\ldots, F_k)$, where $F_1\supset F_2\supset \ldots \supset F_k$ and $\forall i\in [k],  F_i\in \cF$, and a full chain $M$
of $\b_n$  that contains $Q$,  we call the pair $(M,Q)$ a {\it $k$-marked chain}
with markers in $\cF$. We call $M$ the {\it host} of the $k$-marked chain
$(M,Q)$ and say that $M$ {\it hosts} $(M,Q)$. Throughout our paper, the family $\cF$ is fixed. So, if we  omit the phrase ``with markers in $\cF$'', it should be understood that the markers (the $F_i$'s) are in $\cF$.
Note that if $M$ and $M'$ are two distinct full chains of $\b_n$ that contain $Q$, then
$(M,Q)$ and $(M',Q)$ are in fact considered to be two distinct $k$-marked chains in our definition.
The following lemma is a claim contained in the proof of Lemma 4  in \cite{bukh} (Lemma \ref{many-marked-chains} below), we paraphrase it slightly as follows.
Recall that ${x\choose k}$ is defined to be $0$ when $x<k$.

\begin{lemma} \label{marked-chain-count}
Let $k\geq 2$ be a positive integer. Let $\cF\subseteq \b_n$. Let $\cC(\b_n)$ denote the set of $n!$ full chains of $\b_n$.
For each $M\in \cC(\b_n)$, let $x(M)$ denote the number of members of $\cF$ contained in $M$.
Let $\cL$ denote the family of all the  $k$-marked chains with markers in $\cF$. Then $$|\cL|=\sum_{M\in \cC(\b_n)} {x(M)\choose k}.$$
\end{lemma}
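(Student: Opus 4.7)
The plan is a straightforward double count, partitioning the set $\cL$ of $k$-marked chains according to the host full chain. Concretely, I would write $\cL = \bigsqcup_{M \in \cC(\b_n)} \cL_M$, where $\cL_M$ is the set of $k$-marked chains with host $M$; this is a disjoint union since every marked chain $(M,Q)$ determines its first coordinate unambiguously. It then suffices to show $|\cL_M| = {x(M) \choose k}$ for each fixed $M$ and then sum over $M$.

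To evaluate $|\cL_M|$, I would fix $M$ and observe that a $k$-marked chain with host $M$ is determined entirely by its $Q$-coordinate, namely a strictly decreasing tuple $F_1 \supsetneq F_2 \supsetneq \cdots \supsetneq F_k$ with each $F_i \in \cF$ and each $F_i$ lying on $M$. The set $\cF \cap M$ of members of $\cF$ lying on $M$ has cardinality $x(M)$ by definition. Since $M$ is itself a chain in $\b_n$, any $k$-element subset of $\cF \cap M$ is totally ordered by containment, so listing such a subset in strictly decreasing order gives exactly one valid tuple $Q$, and conversely each valid $Q$ arises this way. Thus there is a bijection between $k$-element subsets of $\cF \cap M$ and the $Q$'s, yielding $|\cL_M| = {x(M) \choose k}$; the convention ${x(M) \choose k} = 0$ for $x(M) < k$ correctly absorbs the case in which $M$ hosts no $k$-marked chain.

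Summing the equation $|\cL_M| = {x(M) \choose k}$ over all $M \in \cC(\b_n)$ gives the claimed identity. There is no substantive obstacle in this argument; the only point worth flagging is that the definition declares $(M,Q)$ and $(M',Q)$ with $M \neq M'$ to be distinct marked chains even when they share the same $Q$, and this is exactly what permits the clean partition by host and thereby the appearance of the factor $n!$-many summands on the right-hand side.
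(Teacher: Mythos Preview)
Your proof is correct and follows exactly the same approach as the paper: partition $\cL$ by host chain, observe that each $M$ hosts ${x(M)\choose k}$ marked chains, and sum. Your write-up is in fact more detailed than the paper's two-sentence proof, but the underlying idea is identical.
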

\begin{proof}
Given any $M\in \cC(\b_n)$, $M$ hosts exactly ${x(M)\choose k}$ many $k$-marked chains with markers in $\cF$.
So there are altogether $\sum_{M\in \cC(\b_n) }{x(M)\choose k}$ many $k$-marked chains with markers in $\cF$.
\end{proof}

The following lemma is established in \cite{bukh}. We rephrase the proof slightly differently.

\begin{lemma} \label{many-marked-chains} {\bf (\cite{bukh})}
Let $\epsilon$ be a small positive real. Let $n$ be a sufficiently large positive integer.
Let $\cF\subseteq \b_n$.  Let $\cL$ denote the family of all the $k$-marked chains with markers in $\cF$.
If $|\cF|\geq (t-1+\epsilon){n\choose \fl{n/2}}$, then
 $$|\cL|\geq (\epsilon/k) k!.$$
\end{lemma}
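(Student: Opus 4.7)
The plan is to combine Lemma \ref{marked-chain-count} with a standard LYM-type double count and one elementary binomial inequality.

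First I would estimate $\sum_{M \in \cC(\b_n)} x(M)$ by double counting pairs $(F,M)$ with $F \in \cF$ and $F \in M$. Each $F \in \cF$ lies on exactly $|F|!\,(n-|F|)!$ full chains of $\b_n$, so
$$\sum_{M \in \cC(\b_n)} x(M) \;=\; \sum_{F \in \cF} |F|!\,(n-|F|)!.$$
Since the function $x \mapsto x!(n-x)!$ attains its minimum on $\{0,1,\ldots,n\}$ at $x=\fl{n/2}$, each summand is at least $\fl{n/2}!\,\ce{n/2}! = n!/{n\choose \fl{n/2}}$. Plugging in the hypothesis $|\cF| \geq (k-1+\epsilon){n\choose \fl{n/2}}$ yields
$$\sum_{M \in \cC(\b_n)} x(M) \;\geq\; (k-1+\epsilon)\, n!.$$

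Next I would convert this linear sum into a lower bound on $\sum_M {x(M) \choose k}$ via the elementary pointwise inequality ${x \choose k} \geq x-(k-1)$, valid for every integer $x \geq 0$. Indeed, for $x \leq k-1$ the binomial is $0$ while $x-(k-1) \leq 0$; for $x \geq k$, iterating Pascal's identity gives ${x\choose k} = \sum_{j=k-1}^{x-1} {j \choose k-1}$, a sum of $x-k+1$ terms each at least $1$. Summing this inequality over all full chains and applying Lemma \ref{marked-chain-count},
$$|\cL| \;=\; \sum_{M \in \cC(\b_n)} {x(M) \choose k} \;\geq\; \sum_{M \in \cC(\b_n)} \bigl(x(M)-(k-1)\bigr) \;\geq\; (k-1+\epsilon)\,n! - (k-1)\,n! \;=\; \epsilon\, n!.$$
For large $n$ this is comfortably stronger than the stated bound $(\epsilon/k)\,k!$, completing the proof.

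There isn't really a hard step; the argument is one LYM-type double count followed by the elementary binomial estimate. The only subtlety worth checking is that ${x \choose k} \geq x-(k-1)$ holds even for those $M$ carrying only a few (or zero) members of $\cF$, so that the sum can be taken over \emph{all} $n!$ full chains without having to isolate ``good'' ones; this is exactly what the two-case verification above provides.
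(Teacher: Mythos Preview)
Your proof is correct and follows essentially the same approach as the paper: an LYM-type double count to show $\sum_M x(M)\geq (k-1+\epsilon)\,n!$, followed by an elementary binomial inequality to convert this into a lower bound on $\sum_M {x(M)\choose k}$. Your pointwise estimate ${x\choose k}\geq x-(k-1)$ is marginally cleaner than the paper's ${i\choose k}\geq i/k$ for $i\geq k$ (it avoids splitting off the chains with $x(M)<k$) and yields $\epsilon\,n!$ rather than $(\epsilon/k)\,n!$, but the strategies are the same.
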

\begin{proof}
For each $i$, let $C_i$ denote the number of full chains $M$ of $\b_n$ with $x(M)=i$.
Let $X$ be  the random variable
that counts the number of members of $\cF$ contained in a random full chain $M$ of $\b_n$.
For each member $F\in \cF$, the probability that $M$ contains $F$ is precisely $\frac{1}{{n\choose |F|}}$.
Hence $E(X)=\sum_{F\in \cF} \frac{1}{{n\choose |F|}}\geq |\cF|\cdot \frac{1}{{n\choose \fl{n/2}}}\geq k-1+\epsilon$.
On the other hand,  by a direct counting argument we have $E(X)=\sum_i iC_i/n!$. Thus, $\sum_i iC_i\geq (k-1+\epsilon)n!$.
Clearly, $\sum_{i=1}^{k-1} iC_i\leq (k-1)n!$. So, $\sum_{i=k}^n iC_i\geq \epsilon n!$. For $i\geq k$, we have ${i\choose k}\geq \frac{i}{k}$.
By Lemma \ref{marked-chain-count}, the number of $k$-marked chains with members in $\cF$ equals $\sum_i C_i{i\choose k}\geq \sum_{i=k}^n C_i(i/k)
=(1/k)\sum_{i=k}^n iC_i\geq (\epsilon/k) n!$.
\end{proof}


\section{Forbidden neighborhoods}

Recall that elements of $\b_n$ are subsets of $[n]$.
We refer to elements of $\b_n$ as {\it vertices} in the lattice. If $v$ is a vertex in $\b_n$, it is also understood to be the
subset of $[n]$ that it represents. The {\it cardinality} or {\it weight} of $v$, denoted by $|v|$, is the cardinality of
the subsets of $[n]$ that $v$ represents. Even though the partial ordering associated with $\b_n$ is the
containment $\subseteq$ relation, we will continue to denote it by $\leq$ in most cases.
If $u,v\in \b_n$ and $u\leq v$, we call $u$ a {\it descendant of $v$} and we call $v$ an {\it ancestor of
$u$}.
Given a vertex $v$ in $\b_n$, the {\it down-set} $D(v)$ of $v$ is defined to be
$$D(v)=\{u\in \b_n: u\leq v\}.$$
In other words, $D(v)$ is the set of all descendants of $v$.  Note that if $|v|=m$, then $D(v)$ forms
a Boolean lattice  $\b_m$ of order $m$.
The {\it up-set} $U(v)$ of $v$ is defined to be
$$U(v)=\{u\in \b_n: v\leq u\}.$$
In other words, $U(v)$ is the set of all ancestors of $v$.
Note that if $|v|=m$, then $U(v)$ forms a Boolean lattice $\b_{n-m}$ of order $n-m$.
If $S$ is a set of vertices in $\b_n$, we define
$$D(S)=\bigcup_{v\in S} D(v) \quad \mbox{and} \quad U(S)=\bigcup_{v\in S} U(v).$$

Given a vertex $v\in \wb$, a set $S\subseteq \wb$, $S\cap U(v)=\emptyset$, define
\begin{equation} \label{D-star-definition}
D^*(v,S)=[(D(v)\setminus\{v\}) \cap (U(S)\cup D(S))]\cap \wb.
\end{equation}
We call $D^*(v,S)$ the {\it forbidden neighborhood}  of $S$ {\it under} $v$ in $\wb$.
Given a vertex $v\in \wb$, a set $S\subseteq \wb$, $S\cap D(v)=\emptyset$, let
\begin{equation} \label{D-star-definition}
U^*(v,S)=[(U(v)\setminus\{v\})\cap (U(S)\cup D(S))]\cap \wb.
\end{equation}
We call $U^*(v,S)$ the {\it forbidden neighborhood} of $S$ {\it above}  $v$ in $\wb$.

The next two lemmas play an important role in our arguments.

\begin{lemma}\label{chain-into-lower-zone}
Let $s$ be a fixed positive integer. Let $n$ be a sufficiently large positive integer.
Let $v\in \wb, S\subseteq \wb$, where  $S\cap U(v)=\emptyset$ and $|S|=s$.
Let $M$ be a uniformly chosen random full chain of $D(v)$ (among all $|v|!$ full chains
of $D(v)$). We have
$$\Pr ( M\cap (D^*(v,S))\neq \emptyset)\leq \frac{27s\sqrt{n\ln n}}{n}.$$
\end{lemma}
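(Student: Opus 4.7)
\medskip

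\noindent\textbf{Proof plan.} My plan is to first union-bound over the $s$ elements of $S$, which reduces the task to showing that for each fixed $w \in S$ the probability that $M$ meets the single-element forbidden neighborhood $[(D(v)\setminus\{v\}) \cap (U(w)\cup D(w))]\cap\wb$ is $O(\sqrt{n\ln n}/n)$. For a fixed $w$ I split this neighborhood into Case~1 (a candidate hit vertex $u$ satisfies $u\subseteq w$, hence $u\subseteq v\cap w$) and Case~2 ($u\supseteq w$, hence $w\subseteq u\subsetneq v$), and bound each case separately.

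The key identification is that a uniformly random full chain $M$ of $D(v)$ is the same as a uniformly random permutation $\pi$ of the $|v|$ elements of $v$, with $A_i := \{\pi(1),\ldots,\pi(i)\}$ forming the $i$-th vertex of $M$. Write $m_0 = n/2-2\sqrt{n\ln n}$ and $m_1 = n/2+2\sqrt{n\ln n}$, so any two weights in $\wb$ differ by at most $4\sqrt{n\ln n}$. In Case~1, the hypothesis $S\cap U(v)=\emptyset$ forces $v\cap w\subsetneq v$, so $T := v\setminus w$ is nonempty; the case is vacuous unless $|v\cap w|\geq m_0$, and otherwise the event reduces to $A_{m_0}\subseteq v\cap w$, i.e.\ $\pi(1),\ldots,\pi(m_0)$ all avoid $T$. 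Its probability equals $\binom{|v|-|T|}{m_0}/\binom{|v|}{m_0} = \binom{|v|-m_0}{|T|}/\binom{|v|}{|T|}\leq (|v|-m_0)/|v|$, which is at most $12\sqrt{n\ln n}/n$ once $n$ is large enough that $m_0\geq n/3$. In Case~2 (vacuous unless $w\subsetneq v$), since $|w|\geq m_0$ and $|v|\leq m_1$ the relevant window of indices $i$ is $[|w|, |v|-1]$, and the event ``some $A_i$ in this window contains $w$'' is equivalent to the last $w$-element of $\pi$ appearing at position strictly less than $|v|$, i.e.\ $\pi(|v|)\in v\setminus w$, which has probability $(|v|-|w|)/|v| \leq 12\sqrt{n\ln n}/n$ by the same size estimates.

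Summing the two per-$w$ contributions over all $w\in S$ yields $\Pr(M\cap D^*(v,S)\neq\emptyset)\leq 24s\sqrt{n\ln n}/n\leq 27s\sqrt{n\ln n}/n$, as required. The arithmetic is routine; the hard part is careful bookkeeping of the boundary conditions, namely verifying in each case that the witnessing chain element is genuinely in $D(v)\setminus\{v\}$ and lies in $\wb$, and handling the two vacuous sub-cases ($|v\cap w|<m_0$ in Case~1, $w\not\subseteq v$ in Case~2). The numerical engine behind both bounds is simply that every weight in $\wb$ is within $2\sqrt{n\ln n}$ of $n/2$, so the ``wiggle room'' around $v$ inside $\wb$ is only $O(\sqrt{n\ln n})$, which is $O(\sqrt{\ln n/n})$ relative to $|v|\sim n/2$.
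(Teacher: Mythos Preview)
Your proof is correct and follows essentially the same approach as the paper: view a random full chain of $D(v)$ as a random permutation of the elements of $v$, split the forbidden neighborhood into its $U$-part and its $D$-part, and exploit that all relevant weights lie within $4\sqrt{n\ln n}$ of each other. The only organizational difference is that you union-bound over $w\in S$ first and then treat $U(w)$ and $D(w)$ separately, whereas the paper handles $U(S)$ and $D(S)$ collectively (using $\bigcap_{y_i\in S\cap D(v)} y_i$ for the $U$-part and a single transversal $\{u_1,\ldots,u_s\}$ for the $D$-part, plus a separate $3s/n$ term for $S$ itself); your per-$w$ treatment is slightly cleaner and yields the marginally better constant $24$ in place of $27$.
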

\begin{proof}
For any vertex $w$ in $(D(v)\setminus \{v\})\cap \wb$, the probability that $M$ contains $w$ is
$\frac{1}{{|v|\choose |w|}}\leq \frac{1}{|v|}\leq \frac{1}{n/3}=\frac{3}{n}$. Since
$|(S\cap (D(v)\setminus \{v\}))\cap \wb|\leq s$,
\begin{equation}\label{S-itself}
\Pr\left (M\cap [(S\cap (D(v)\setminus \{v\}))\cap\wb]\neq \emptyset\right) \leq \frac{3s}{n}.
\end{equation}
Let $\ell=|v|-(\frac{n}{2}-2\sqrt{n\ln n})$. Since $v\in \wb$, $\ell\leq 4\sqrt{n\ln n}$.
To bound the probability that $M$ intersects $U=(U(S)\cap (D(v)\setminus\{v\}))\cap \wb=U(S)\cap (D(v)\setminus\{v\})$, we first bound the probability
that $M$ is disjoint from $U$.  Note that $U=U(S\cap D(v))\cap(D(v)\setminus\{v\})$ since only a descendant of $v$
may have an ancestor in $D(v)\setminus \{v\}$.
Suppose $S\cap D(v)=\{y_1,\ldots, y_p\}$, where $p\leq s$.
By our assumptions, $\forall i\in [p], y_i\leq v$
and $|v|-|y_i|\leq \ell$ (since $y_i\in \wb$). When we view $v,y_1,\ldots, y_p$ as sets we have $|\bigcap_{i=1}^p y_i|\geq |v|-p\ell$.
Being a full chain of $D(v)$, we may view $M$ as being obtained by starting with the set $v$ and successively removing
an element in it. For $M$ not to enter $U(\{y_1,\ldots, y_p\})\setminus \{v\}$, it suffices that the first element
removed from the set $v$ is in $\bigcap_{i=1}^p y_i$. So the probability that $M$ does not intersect $U$ is
at least $|\bigcap_{i=1}^p y_i|/|v|\geq 1-(p\ell/|v|)\geq 1-s\ell/|v|$. Therefore
\begin{equation}\label{avoid-upgraph}
\Pr(M\cap [(U(S)\cap (D(v)\setminus \{v\}))\cap \wb]\neq \emptyset)\leq \frac{s\ell}{|v|}\leq \frac{4s\sqrt{n\ln n}}{n/3}=\frac{12s\sqrt{n\ln n}}{n}.
\end{equation}
Next, we bound the probability that $M$ intersects $D=(D(S) \cap (D(v))\setminus\{v\}))\cap \wb$. Again, we first bound the
probability that $M$ is disjoint from $D$. Suppose $S=\{z_1,\ldots, z_s\}$. Since $S\cap U(v)=\emptyset$,
$\forall i\in [s]$, we have  $v\not\leq z_i$. So set $v$ has an element $u_i$ that is not in set $z_i$.
When we form $M$ by successively removing elements of  set $v$, as long as each of the first $\ell$ steps removes
an element outside $\{u_1,\ldots, u_s\}$, $M$ would not enter $D$. We have
\begin{eqnarray*}
\Pr( M\cap [(D(S)\cap (D(v)\setminus \{v\}))\cap \wb]=\emptyset)&\geq& \frac{(|v|-s)(|v|-s-1)\cdots (|v|-s-\ell+1)}{|v|(|v|-1)\cdots (|v|-\ell+1)}\\
&=&(1-\frac{s}{|v|})(1-\frac{s}{|v|-1})\cdots (1-\frac{s}{|v|-\ell+1})\\
&\geq& (1-\frac{s}{|v|-\ell+1})^\ell\\
&\geq & (1-\frac{s}{n/3})^\ell \quad \mbox{ (for large $n$)}\\
&\geq & 1-\frac{s\ell}{n/3} \quad \mbox{ (for large $n$)}
\end{eqnarray*}
Therefore,
\begin{equation} \label{avoid-downgraph}
\Pr( M\cap [(D(S)\cap (D(v)\setminus \{v\}))\cap \wb]\neq \emptyset) \leq \frac{s\ell}{n/3}=\frac{3s\ell}{n}\leq \frac{12s\sqrt{n\ln n}}{n}.
\end{equation}
Combining Equations (\ref{S-itself}), (\ref{avoid-upgraph}), and (\ref{avoid-downgraph}), we get
$$\Pr(M\cap D^*(v,S)\neq \emptyset)\leq \frac{27s\sqrt{n\ln n}}{n},$$
for large $n$.
\end{proof}

By a similar argument, we also have

\begin{lemma}\label{chain-into-upper-zone}
Let $s$ be a fixed positive integer. Let $n$ be a sufficiently large positive integer.
Let $v\in \wb, S\subseteq \wb$, where $S\cap D(v)=\emptyset$ and $|S|=s$. Let $M$ be a uniformly chosen random full chain of $U(v)$ (from all $(n-|v|)!$ full chains of $U(v)$).
We have
$$\Pr ( M\cap U^*(v,S)\neq \emptyset)\leq \frac{27s\sqrt{n\ln n}}{n}.$$
\end{lemma}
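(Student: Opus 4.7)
The plan is to mirror the proof of Lemma \ref{chain-into-lower-zone} verbatim, swapping the roles of ``up'' and ``down.'' I view the uniformly random full chain $M$ of $U(v)$ as being generated by starting at $v$ and successively adding, uniformly at random, one element of $[n]\setminus v$ that has not yet been added, until $[n]$ is reached. There are $n-|v|\geq n/3$ such elements for large $n$, since $v\in\wb$. Set $\ell=(\frac{n}{2}+2\sqrt{n\ln n})-|v|\le 4\sqrt{n\ln n}$, so that only the first $\ell$ vertices produced after $v$ can possibly lie in $\wb$. I decompose $U^*(v,S)$ into three parts: (i) $S\cap(U(v)\setminus\{v\})\cap\wb$, (ii) $U(S)\cap(U(v)\setminus\{v\})\cap\wb$, and (iii) $D(S)\cap(U(v)\setminus\{v\})\cap\wb$, and bound the probability that $M$ hits each.

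Part (i) is immediate: for any fixed $w\in U(v)\setminus\{v\}$, $\Pr(w\in M)=1/\binom{n-|v|}{|w|-|v|}\le 1/(n-|v|)\le 3/n$, which yields $3s/n$ after a union bound over $S$. Part (iii) is the analog of the ``avoid upgraph'' step in the previous proof: since $w\in D(S)\cap U(v)$ forces $v\subseteq w\subseteq z_i$ for some $z_i\in S$, only the elements of $S\cap U(v)=\{y_1,\ldots,y_p\}$ (with $p\le s$) contribute, and each satisfies $|y_i|-|v|\le\ell$. The chain $M$ enters $\bigcup_i D(y_i)\setminus\{v\}$ iff the \emph{first} element added to $v$ lies in $\bigcup_i (y_i\setminus v)$, a set of size at most $p\ell\le s\ell$. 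Hence the probability of part (iii) being hit is at most $s\ell/(n-|v|)\le 12s\sqrt{n\ln n}/n$.

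Part (ii) is the analog of the ``avoid downgraph'' step. Since $S\cap D(v)=\emptyset$, each $z_i\in S$ satisfies $z_i\not\subseteq v$, so I can pick a witness $u_i\in z_i\setminus v$. If none of $u_1,\ldots,u_s$ appears among the first $\ell$ elements added to $v$, then for every $i$ and every $j\le\ell$ the $j$-th vertex on $M$ fails to contain $z_i$, so $M$ avoids $U(z_i)\cap\wb\cap(U(v)\setminus\{v\})$. The same product-of-ratios estimate used in the proof of Lemma \ref{chain-into-lower-zone} shows this failure probability is at most $s\ell/(n-|v|-\ell+1)\le 12s\sqrt{n\ln n}/n$ for large $n$. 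Summing the three bounds by the union bound gives $\Pr(M\cap U^*(v,S)\neq\emptyset)\le 3s/n+24s\sqrt{n\ln n}/n\le 27s\sqrt{n\ln n}/n$ for $n$ sufficiently large. There is no new analytic difficulty; the only bookkeeping point is to verify the dual identity $D(S)\cap U(v)=D(S\cap U(v))\cap U(v)$, which plays the role of the identity $U(S)\cap D(v)=U(S\cap D(v))\cap D(v)$ used before and which likewise follows directly from transitivity of containment.
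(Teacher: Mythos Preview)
Your proof is correct and is exactly the dual argument the paper has in mind: the paper itself gives no separate proof of Lemma~\ref{chain-into-upper-zone}, stating only ``By a similar argument, we also have\ldots'', and your write-up faithfully carries out that symmetric argument by swapping $D(v)$ with $U(v)$, ``remove an element'' with ``add an element'', and the roles of $U(S)$ and $D(S)$.
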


\section{$k$-marked chains and related notions}

In this section, as in the rest of the paper, chains are viewed from top to bottom, unless otherwise specified.
Let $H$ be a poset whose Hasse diagram is a tree of height $k$.  Let $h=|V(H)|$.
Let $\epsilon$ be a small positive real and $n$ be a sufficiently large positive integer $n$.
Let $\cF\subseteq \wb$ with $|\cF|\geq (k-1+\epsilon){n\choose \fl{n/2}}$.
Let $\cL$  be a family of $k$-marked chains with markers in $\cF$.   For each $v\in \wb$ and
$d\in [k]$, let
$$\cL(v,d)=\{(M,Q)\in \cL: \mbox{ the $d$-th member of $Q$ is $v$} \}.$$
Let $\cC(\b_n)$ denote the set of all $n!$ full chains of $\b_n$.
Next, we are going to define the notion of {\it bad}. This is defined relative to $h=|V(H)|$, which is fixed throughout
this section.
For each $d\in [k]$, we define a vertex $v\in \wb$ to be {\it $d$-lower-bad} relative to $\cL$ if  there exists
a set $S\subseteq \wb, S\cap U(v)=\emptyset, |S|\leq h$,  such that
$$\cL(v,d)\neq \emptyset  \mbox{ and } \forall (M,Q)\in \cL(v,d),   Q\cap D^*(v,S)\neq \emptyset.$$
We call $S$ a {\it $d$-lower-witness} of $v$ relative to $\cL$.
Similarly, we define a vertex $v\in\wb$ to be {\it $d$-upper-bad} relative to $\cL$ if  there exists
a set $T\subseteq\wb,  T\cap D(v)=\emptyset, |T|\leq h$,  such that
$$\cL(v,d)\neq \emptyset  \mbox{ and } \forall (M,Q)\in \cL(v,d),   Q\cap U^*(v,T)\neq \emptyset.$$
We call $T$ a {\it $d$-upper-witness} of $v$  relative to $\cL$. Let $d\in [k]$.
Let $v\in \wb$ and $M\in \cC(\b_n)$. We say that $v$ is {\it $d$-lower-bad } relative to $M$ and $\cL$
if $v$ is  $d$-lower-bad  relative to $\cL$ and
there exists at least one $Q$ such that $(M,Q)\in \cL(v,d)$.  We say that $v$ is {\it $d$-upper-bad} relative
to $M$ and $\cL$ if  $v$ is $d$-upper-bad relative to $\cL$ and there exists at least one $Q$
such that $(M,Q)\in \cL(v,d)$.
A $k$-marked chain $(M,Q)$ is {\it good} relative to $\cL$
 if $Q$ doesn't contain a vertex $v$ that is either $d$-lower-bad or $d$-upper-bad relative to $M$ and $\cL$
for any $d\in [k]$. The following proposition follows immediately from the definitions above.

\begin{proposition} \label{disjoint}
Let $(M,Q)$ be a member of $\cL$ that is good relative to $\cL$, and let $v\in Q$. Suppose $v$ is
the $d$-th vertex of $Q$. Then for any set $S$ of at most $h$ vertices of $\b_n$, where $S\cap U(v)=\emptyset$,
there exists a member $(M',Q')\in \cL(v,d)$ that  is disjoint from
$D^*(v,S)$. For any set $T$ of at most $h$ vertices, where $T\cap D(v)=\emptyset$,
there exists a member $(M'',Q'')\in \cL(v,d)$ that  is disjoint
from $U^*(v,T)$.
\end{proposition}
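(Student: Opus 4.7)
The plan is to prove this by straightforward definition unwrapping. By hypothesis $(M,Q)$ is good relative to $\cL$, so by the definition of ``good'' the vertex $v$ (being the $d$-th vertex of $Q$) is neither $d$-lower-bad nor $d$-upper-bad relative to $M$ and $\cL$. I will work out the lower case; the upper case is symmetric, using $U^*$ and Lemma~\ref{chain-into-upper-zone} in place of $D^*$ and Lemma~\ref{chain-into-lower-zone}.

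First I will promote ``not $d$-lower-bad relative to $M$ and $\cL$'' to ``not $d$-lower-bad relative to $\cL$''. By definition, $v$ is $d$-lower-bad relative to $M$ and $\cL$ iff $v$ is $d$-lower-bad relative to $\cL$ and there exists at least one $Q''$ with $(M,Q'')\in\cL(v,d)$. The choice $Q''=Q$ certifies the second clause, since $(M,Q)\in\cL(v,d)$ by the hypothesis that $v$ is the $d$-th vertex of $Q$. Hence the first clause must fail, giving that $v$ is not $d$-lower-bad relative to $\cL$.

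Next, I will negate the definition of $d$-lower-bad relative to $\cL$ directly. Given any set $S$ of at most $h$ vertices with $S\cap U(v)=\emptyset$, were every $(M',Q')\in\cL(v,d)$ to satisfy $Q'\cap D^*(v,S)\neq\emptyset$, then together with $\cL(v,d)\neq\emptyset$ (witnessed by $(M,Q)$ itself) this would exhibit $S$ as a $d$-lower-witness of $v$ relative to $\cL$, contradicting the conclusion of the previous paragraph. Therefore some $(M',Q')\in\cL(v,d)$ satisfies $Q'\cap D^*(v,S)=\emptyset$, as desired.

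I do not expect any real obstacle here: the proposition is essentially a contrapositive restatement of the definitions of ``bad'' and ``good''. The only point that requires a moment of care is to observe that the ambient marked chain $(M,Q)$ itself supplies the nonemptiness of $\cL(v,d)$ needed at both steps of the unwrapping, so the quantifier structure of the definition collapses cleanly in the presence of a good $(M,Q)$ through $v$.
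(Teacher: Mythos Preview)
Your proof is correct and follows essentially the same argument as the paper: observe that $(M,Q)\in\cL(v,d)$ certifies $\cL(v,d)\neq\emptyset$, so goodness forces $v$ not to be $d$-lower-bad (or $d$-upper-bad) relative to $\cL$, and then negate the definition of a witness. One small remark: your aside about Lemma~\ref{chain-into-lower-zone} and Lemma~\ref{chain-into-upper-zone} is unnecessary here---those probabilistic lemmas play no role in this proposition, which is purely a definition-chase.
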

\begin{proof}
Note that $(M,Q)\in \cL(v,d)$. By our assumption, $v$ is not $d$-lower-bad or $d$-upper-bad relative to $\cL$;
otherwise $v$ would be either $d$-lower-bad or $d$-upper-bad relative to $M$ and $\cL$,
contradicting $(M,Q)$ being good relative to $\cL$.
So, there is no $d$-lower witness of $v$ or $d$-upper-witness of $v$ of size at most $h$ and the claim follows.
\end{proof}

Now, for each $d\in [k]$ and for each $v \in \wb$ that is $d$-lower-bad relative to $\cL$,
we fix a corresponding $d$-lower-witness
$S_{v,d}$ of $v$. For each $d\in [k]$ and each $v\in \wb$ that is $d$-upper-bad relative to $\cL$, we
fix a corresponding $d$-upper-witness $T_{v,d}$.
A chain $x_1>y_1>x_2>y_2>\ldots >x_p>y_p$ in $\b_n$ is called a {\it $d$-lower-bad string} if
 for each $i\in [p]$, $x_i$ is $d$-lower-bad relative to $\cL$ and $y_i\in D^*(x_i,S_{{x_i},d})$.
Similarly, a chain $x_1<y_1<x_2<y_2<\ldots < x_p<y_p$ in $\b_n$ is called a {\it $d$-upper-bad string}
if for each $i\in [p]$, $x_i$ is $d$-upper-bad relative to $\cL$ and $y_i\in U^*(x_i, T_{{x_i},d})$.

Given a sequence $J=(j_1, j_2, \ldots, j_q)$ of numbers in $[n]$, where either $j_1<j_2<\ldots< j_q$ or
$j_1> j_2>\ldots > j_q$, and a chain $C$ in $\b_n$, let $C[J]$ denote the subchain of $C$ consisting of
the $j_1$-th, $j_2$-th, $\ldots, j_q$-th members of $\cF$ on $C$ (counted from the top).
If $C$ contains fewer than $q$ members of $\cF$, then $C[J]$ is defined to be the empty chain.
If $J$ contains only one number $j$, then we write $C[j]$ for $C[\{j\}]$.

\begin{lemma} \label{exponential}
Let $d\in [k]$.
Let $p$ be a positive integer. Let $J$ be an increasing sequence of $2p$ numbers  in $[n]$.
Let $v\in \wb$.
Let $M$ be a uniformly chosen random full chain of $D(v)$. Then
$$\Pr(M[J] \mbox{ forms a $d$-lower-bad string})\leq  (\frac{27h\sqrt{n\ln n }}{n})^p.$$
\end{lemma}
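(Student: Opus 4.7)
The plan is to proceed by induction on $p$, using Lemma \ref{chain-into-lower-zone} as the one-step estimate and exploiting the following Markov-like property of uniformly random full chains: conditional on a uniformly random full chain $M$ of $D(v)$ passing through a vertex $w$, the subchain of $M$ lying in $D(w)$ (call it $M_{\leq w}$) is a uniformly random full chain of $D(w)$, independent of the subchain $M_{\geq w}$ lying in $[w,v]$. This is standard: a full chain of $D(v)$ corresponds to an ordering of the elements of the set $v$, and passing through $w$ forces the first $|v|-|w|$ elements removed to be $v\setminus w$ in some order, which leaves an independent uniform ordering of $w$ itself.

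For the base case $p=1$, write $J=(j_1,j_2)$, so $M[J]=(x_1,y_1)$ is a $d$-lower-bad string exactly when $x_1$ is $d$-lower-bad relative to $\cL$ and $y_1\in D^*(x_1,S_{x_1,d})$. Condition on $x_1$; if $x_1$ is not $d$-lower-bad the contribution vanishes, and otherwise the event $\{y_1\in D^*(x_1,S_{x_1,d})\}$ is contained in the event that $M_{\leq x_1}$ meets $D^*(x_1,S_{x_1,d})$. Since $x_1\in\cF\subseteq\wb$ and $|S_{x_1,d}|\leq h$, Lemma \ref{chain-into-lower-zone} applied to the uniformly random full chain $M_{\leq x_1}$ of $D(x_1)$ gives conditional probability at most $27h\sqrt{n\ln n}/n$, and summing over $x_1$ finishes the base case.

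For the inductive step $p\geq 2$, decompose the bad-string event as $A\cap B$, where $A=\{x_1$ is $d$-lower-bad and $y_1\in D^*(x_1,S_{x_1,d})\}$ is determined by $M_{\geq y_1}$, and $B=\{(x_2,y_2,\ldots,x_p,y_p)$ is a $d$-lower-bad string$\}$ is determined by $M_{\leq y_1}$. Condition on $M_{\geq y_1}$: this fixes $A$, and by the Markov property $M_{\leq y_1}$ is a uniformly random full chain of $D(y_1)$. On the event $A$ one has $y_1\in D^*(x_1,S_{x_1,d})\subseteq\wb$, so the inductive hypothesis applies to $M_{\leq y_1}$ with the shifted increasing sequence $J'=(j_3-j_2+1,\ldots,j_{2p}-j_2+1)$ of length $2(p-1)$, yielding $\Pr(B\mid M_{\geq y_1})\leq(27h\sqrt{n\ln n}/n)^{p-1}$ on $A$. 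Combined with the base-case bound $\Pr(A)\leq 27h\sqrt{n\ln n}/n$, this gives $\Pr(A\cap B)\leq(27h\sqrt{n\ln n}/n)^p$.

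The one substantive ingredient beyond Lemma \ref{chain-into-lower-zone} is the Markov-like decomposition described above, which makes the induction telescope cleanly; the remainder is bookkeeping the index shift from $J$ to $J'$ (legal because $J$ is increasing and $y_1\in\cF$ is itself the top marker of $M_{\leq y_1}$) and noting that $y_1\in\wb$ holds automatically on $A$, which legalizes the inductive call.
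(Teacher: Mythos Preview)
Your proof is correct and follows essentially the same route as the paper's: induction on $p$, with Lemma \ref{chain-into-lower-zone} as the one-step input and the Markov property of uniformly random full chains driving the conditioning; the paper writes this out as explicit sums over $u=M[j_1]$ and $y=M[j_2]$ rather than conditioning on $M_{\geq y_1}$, but the argument is the same, including the index shift to $J'=(j_3-j_2+1,\ldots,j_{2p}-j_2+1)$ and the observation that $y_1\in D^*(x_1,S_{x_1,d})\subseteq\wb$ legalises the inductive call.
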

\begin{proof}
Let $\gamma=\frac{27h\sqrt{n\ln n }}{n}$.
We use induction on $p$. For fixed $p$, we prove the statement for all $J$ with $2p$ numbers and all $v\in \wb$.
For the basis step, let $p=1$. Suppose $J=(j_1,j_2)$, where $j_1<j_2$.
Let $v\in \wb$ be given. Let $M$ be a uniformly chosen random full chain of $D(v)$.
We have
\begin{eqnarray*}
\Pr(M[J] \mbox{ is a $d$-lower-bad string})&\leq&\sum_{u\in D(v)} \Pr(M[j_1]=u)\cdot \Pr(M[j_2]\in D^*(u,S_{u,d})\mid M[j_1]=u)
\end{eqnarray*}

Fix any $u\in D(v)$. The set of full chains $M$ of $D(v)$
satisfying $M[j_1]=u$ are precisely those concatenations of full
chains of $I(v,u)$ (the sublattice consisting of all $x$
satisfying $v\geq x\geq u$) that contain exactly $j_1$ members of
$\cF$ and all full chains of $D(u)$.  So, $\Pr(M[j_2]\in
D^*(u,S_{u,d})\mid M[j_1]=u)$ is the same as the probability that on a
uniformly chosen random full chain $M'$ of $D(u)$  the
$(j_2-j_1+1)$-th member of $\cF$ is in $D^*(u,S_{u,d})$. This
probability is certainly no more than the probability that $M'$
intersects $D^*(u,S_{u,d})$, which by Lemma
\ref{chain-into-lower-zone}, is at most $\gamma$. Hence,
\begin{eqnarray*}
\Pr(M[J] \mbox{ is a $d$-lower-bad string})&\leq&\sum_{u\in D(v)} [\Pr(M[j_1]=u)\cdot \gamma]\\
&=&\gamma \cdot \sum_{u\in D(v)} \Pr(M[j_1]=u)\leq \gamma,
\end{eqnarray*}
where the last inequality uses the fact that for different $u$ the events $M[j_1]=u$ are certainly
disjoint.
This proves the basis step. For the induction step, assume $p\geq 2$. Suppose the claim has been proved
for all $J'$ and $v\in \wb$, where $J'$ is an increasing sequence of $2p-2$ numbers.
Given a full chain $M$ of $D(v)$ and a vertex $y$ on $M$, we let $M_y$ denote the portion of $M$
from $y$ down. Let $J'=(j_3-j_2+1, j_4-j_2+1,\ldots, j_{2p}-j_2+1)$. We have

\begin{eqnarray*}
\Pr(M[J] \mbox{ is a $d$-lower-bad string})&\leq&\sum_{u\in D(v)} \sum_{y\in D^*(u,S_{u,d})}
[\Pr(M[j_1]=u)\cdot \Pr(M[j_2]=y\mid M[j_1]=u)\\
&& \cdot \Pr(M_y[J']) \mbox{ is a $d$-lower-bad string}\mid  M[j_1]=u, M[j_2]=y)]
\end{eqnarray*}

Using reasoning as in the basis step, given $M[j_1]=u, M[j_2]=y$, all full chains of $D(y)$ are
equally likely for $M_y$. So given $M[j_1]=u, M[j_2]=y$, the probability that $M_y[J']$  is a $d$-lower-bad string
is the same as the probability that given a random full chain $M'$ of $D(y)$, $M'[J']$
forms a $d$-lower-bad string. By induction hypothesis, this is at most $\gamma^{p-1}$. So,

\begin{eqnarray*}
\Pr(M[J] \mbox{ is a $d$-lower-bad string})&\leq&\sum_{u\in D(v)} \sum_{y\in D^*(u,S_{u,d})}
[\Pr(M[j_1]=u)\cdot \Pr(M[j_2]=y\mid M[j_1]=u)\cdot \gamma^{p-1}]\\
&=& \gamma^{p-1}\cdot \sum_{u\in D(v)} \Pr(M[j_1]=u)\cdot \sum_{y\in D^*(u,S_{u,d})} \Pr(M[j_2]=y\mid M[j_1]=u)\\
&\leq& \gamma^{p-1}\cdot \sum_{u\in D(v)} \Pr(M[j_1]=u)\cdot \gamma \quad \mbox{(see discussion in the basis step)}\\
&=& \gamma^p\cdot \sum_{u\in D(v)} \Pr(M[j_1]=u)\leq \gamma^p.
\end{eqnarray*}
This completes the induction step and our proof.
\end{proof}

Using a similar argument, we have

\begin{lemma} \label{exponential2}
Let $d\in [k]$.
Let $p$ be a positive integer. Let $J$ be a decreasing sequence of $2p$ numbers  in $[n]$.
Let $v\in \wb$.
Let $M$ be a uniformly chosen random full chain of $U(v)$. Then
$$\Pr(M[J] \mbox{ forms a $d$-upper-bad string})\leq  (\frac{27h\sqrt{n\ln n }}{n})^p.$$
\end{lemma}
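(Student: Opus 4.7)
The plan is to mirror the induction in the proof of Lemma \ref{exponential}, substituting $D(v) \mapsto U(v)$, $D^* \mapsto U^*$, $S_{v,d} \mapsto T_{v,d}$, and Lemma \ref{chain-into-lower-zone} $\mapsto$ Lemma \ref{chain-into-upper-zone} throughout. I will set $\gamma = 27h\sqrt{n\ln n}/n$ and induct on $p$, proving the bound for every $v \in \wb$ and every decreasing $J = (j_1, \ldots, j_{2p})$ in $[n]$. Because $J$ is decreasing, $M[j_1]$ now plays the role of the bottommost marker $x_1$ in the sought $d$-upper-bad string, while $M[j_{2p}]$ plays the role of the topmost marker $y_p$, exactly mirroring the situation in the lower case where $M[j_1]$ and $M[j_{2p}]$ are the top and bottom of a lower-bad string, respectively.

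For the base case $p = 1$ with $J = (j_1, j_2)$ and $j_1 > j_2$, I would write
$$\Pr(M[J] \text{ is $d$-upper-bad}) \leq \sum_{u \in U(v)} \Pr(M[j_1] = u) \cdot \Pr\bigl(M[j_2] \in U^*(u, T_{u,d}) \,\bigm|\, M[j_1] = u\bigr).$$
Mirroring the decomposition argument in Lemma \ref{exponential}, the full chains of $U(v)$ with $M[j_1] = u$ are the concatenations of a full chain of $U(u)$ containing exactly $j_1$ members of $\cF$ (with $u$ as the bottommost such member) together with an arbitrary full chain of $[v, u]$. Under this decomposition, the conditional probability equals the probability that on a uniformly chosen full chain $M'$ of $U(u)$ the $j_2$-th member of $\cF$ from the top lies in $U^*(u, T_{u,d})$, which is at most the probability that $M'$ meets $U^*(u, T_{u,d})$; Lemma \ref{chain-into-upper-zone} then bounds the latter by $\gamma$. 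Summing over $u$ yields the base case.

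For the inductive step I would condition further on $M[j_2] = y$ with $y \in U^*(u, T_{u,d})$ (else the string fails to be upper-bad), denote by $M_y$ the portion of $M$ from $y$ up to $[n]$, and apply the induction hypothesis to $M_y$ with $J' = (j_3, \ldots, j_{2p})$, a decreasing sequence of $2(p-1)$ numbers. The dual of the crucial uniformity claim in Lemma \ref{exponential} reads: conditional on $M[j_1] = u$ and $M[j_2] = y$, all full chains of $U(y)$ are equally likely for $M_y$. Granting this, the induction hypothesis gives the conditional bound $\gamma^{p-1}$, and completing the sum as in the base case yields $\gamma^p$. The main technical step I expect to be the central obstacle is verifying this uniformity of $M_y$ under the two conditionings; it requires the dual decomposition of $M$ as (chain of $U(y)$) $+$ (chain of $[u, y]$) $+$ (chain of $[v, u]$), together with a careful accounting of how the conditioning constraints distribute among the three pieces so that the top piece emerges as the free, uniformly distributed factor to which the induction can be applied.
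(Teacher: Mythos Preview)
Your mirroring contains a subtle but fatal asymmetry. In Lemma~\ref{exponential}, conditioning on $M[j_1]=u$ (the topmost marker, since $j_1$ is smallest and counting is from the top) constrains the portion $[u,v]$ of the chain \emph{above} $u$ to contain exactly $j_1$ members of $\cF$, and leaves the portion $D(u)$ \emph{below} $u$ as an unconstrained, uniformly random full chain; crucially, the next marker $M[j_2]$ lives in this free lower part, so Lemma~\ref{chain-into-lower-zone} applies directly. In your setup, conditioning on $M[j_1]=u$ (now the bottommost marker, since $j_1$ is largest) again constrains whichever part of the chain is counted first from the top --- namely the $U(u)$ portion, which must contain exactly $j_1$ members of $\cF$ --- and leaves the $[v,u]$ portion free. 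But the next marker $M[j_2]$ (with $j_2<j_1$) sits in the \emph{constrained} $U(u)$ portion, not in the free one. Consequently your claimed equality of conditional probabilities fails: given $M[j_1]=u$, the $U(u)$-part of $M$ is uniform only over full chains of $U(u)$ having exactly $j_1$ members of $\cF$, not over all full chains of $U(u)$, and Lemma~\ref{chain-into-upper-zone} does not apply to that conditioned distribution. The same issue invalidates the uniformity claim you flag as the ``central obstacle'' in the induction step: given $M[j_1]=u$ and $M[j_2]=y$, the portion $M_y$ is uniform only over full chains of $U(y)$ with exactly $j_2$ members of $\cF$, so the induction hypothesis (stated for a uniformly random full chain of $U(y)$) cannot be invoked.

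The genuine dual of Lemma~\ref{exponential} indexes the members of $\cF$ on $M$ from the \emph{bottom} of $U(v)$ (i.e., from $v$) rather than from the top. With bottom-indexing and an increasing sequence of positions, conditioning on the first (lowest) marker $u$ constrains $[v,u]$ and frees $U(u)$; the next marker then lies in the free $U(u)$ part, and Lemma~\ref{chain-into-upper-zone} applies exactly as Lemma~\ref{chain-into-lower-zone} did before. The paper's ``similar argument'' should be read with this reversal of indexing in mind; a clean formal justification is to apply the complementation map $A\mapsto [n]\setminus A$, which carries $D(v)$ to $U([n]\setminus v)$, interchanges lower- and upper-bad strings, and turns counting-from-top into counting-from-bottom.
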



\section{A nested sequence of dense families of $k$-marked chains}

We show in this section that we can obtain a sequence of families of $k$-marked chains with markers in $\cF$,
$\cL_1\supseteq \cL_2\ldots \supseteq \cL_h$, such that for each $i\in [h]$, $|\cL_i|\geq  (\epsilon n!/k) (1-\frac{i}{2k})$
and for each $i\in [h-1]$  every member of $\cL_{i+1}$ is good relative to $\cL_i$.
Let $\cC(\b_n)$ denote the set of full chains of $\b_n$.

\begin{theorem} \label{nested}
Let $k, h$ be  positive integers.
Let $n$ be sufficiently large (as a function of $k,h$).
Let $\cF\subseteq \wb$ be a family with $|\cF|\geq (k-1+\epsilon){n\choose \fl{n/2}}$.
For each $M\in \cC(\b_n)$, let $Y(M)$ denote the set of members of $\cF$ contained in $M$.
There exist functions $X_1,\ldots, X_h$ from $\cC(\b_n)$ to $2^\cF$ such that the following holds:
\begin{enumerate}
\item $\forall M\in \cC(\b_n), X_1(M)=Y(M)$.
\item $\forall i, 1\leq i\leq h-1, \forall M\in \cC(\b_n)$  if $X_{i+1}(M)\neq \emptyset$ then $\frac{|X_{i+1}(M)|}{|X_i(M)|}\geq
1-\frac{1}{4kh}$.
\item $\forall i\in [h]$, the family of $k$-marked chains $\cL_i$ with markers in $\cF$, defined by
$$\cL_i=\left\{(M,Q): M\in \cC(\b_n), \, Q\in {X_i(M)\choose k}\right\}$$
satisfies
$$ |\cL_i|\geq (\epsilon n!/k)(1-\frac{i}{2h}).$$
\item $\forall i\in [h-1]$, every member of $\cL_{i+1}$  is good relative to  $\cL_i$ (where good and bad are defined with respect to $h$).
\end{enumerate}
\end{theorem}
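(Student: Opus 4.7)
We construct $X_1,\ldots,X_h$ by induction on $i$. Set $X_1(M)=Y(M)$: Property 1 is then immediate and Property 3 at $i=1$ follows from Lemma \ref{many-marked-chains} (giving $|\cL_1|\geq \epsilon n!/k \geq (\epsilon n!/k)(1-1/(2h))$); Properties 2 and 4 only concern the transitions $i\to i+1$. For the inductive step, suppose $X_i$ and $\cL_i$ are given. Fix a $d$-lower-witness $S_{v,d}$ (respectively $d$-upper-witness $T_{v,d}$) of size at most $h$ for every vertex $v$ that is $d$-lower-bad (respectively $d$-upper-bad) relative to $\cL_i$, and set
$$B_i(M)=\{v\in X_i(M):v\text{ is $d$-lower-bad or $d$-upper-bad relative to }\cL_i\text{ for some }d\in[k]\}.$$
Define
$$X_{i+1}(M)=\begin{cases}X_i(M)\setminus B_i(M),&\text{if }|B_i(M)|\leq |X_i(M)|/(4kh),\\ \emptyset,&\text{otherwise.}\end{cases}$$
Property 2 is immediate. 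Property 4 follows because every $v\in X_{i+1}(M)$ lies outside $B_i(M)$, so in particular is not $d$-bad relative to $\cL_i$ for any $d$; hence for any $(M,Q)\in\cL_{i+1}$ and any $v\in Q$ at position $d$ in $Q$, $v$ fails to be $d$-bad relative to $M$ and $\cL_i$, which is exactly goodness of $(M,Q)$ relative to $\cL_i$.

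The substantive work is Property 3. We aim for the multiplicative estimate $|\cL_{i+1}|\geq (1-1/(2h))|\cL_i|$, which iterates from $|\cL_1|\geq \epsilon n!/k$ to give $|\cL_i|\geq (1-(i-1)/(2h))|\cL_1|\geq (\epsilon n!/k)(1-i/(2h))$. Split $|\cL_i|-|\cL_{i+1}|=\text{Loss}_r+\text{Loss}_c$, where $\text{Loss}_r$ collects contributions from $M$ with $X_{i+1}(M)\neq\emptyset$ and $\text{Loss}_c$ from the cutoff chains. The ratio loss is easy: Property 2 combined with $(1-1/(4kh))^k\geq 1-1/(4h)$ gives $\text{Loss}_r\leq |\cL_i|/(4h)$. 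For the cutoff loss, the identity $x\binom{y-1}{k-1}=k\binom{y}{k}$ applied with $x=|B_i(M)|,y=|X_i(M)|$ yields $\binom{|X_i(M)|}{k}\leq 4h|B_i(M)|\binom{|X_i(M)|-1}{k-1}$ whenever $|B_i(M)|>|X_i(M)|/(4kh)$, whence
$$\text{Loss}_c\leq 4h\sum_{(M,Q)\in\cL_i}|Q\cap B_i(M)|,$$
and it suffices to show the right-hand side is at most $|\cL_i|/(4h)$ for $n$ sufficiently large.

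The main obstacle is bounding $\sum_{(M,Q)\in\cL_i}|Q\cap B_i(M)|$. Regrouping by bad pairs $(v,d)$, the sum is at most $\sum_d\sum_{v\text{ $d$-lower-bad}}|\cL_i(v,d)|$ plus the upper analogue. For each $d$-lower-bad $v$, the defining property of badness forces every $(M,Q)\in\cL_i(v,d)$ to have $M$'s portion in $D(v)$ meet $D^*(v,S_{v,d})$, an event which by Lemma \ref{chain-into-lower-zone} has probability at most $\gamma=27h\sqrt{n\ln n}/n$ over a uniform random full chain of $D(v)$. Splitting $M$ into its $U(v)$- and $D(v)$-halves and transferring this probabilistic estimate through the combinatorial count should give $|\cL_i(v,d)|\leq\gamma\cdot N_0(v,d)$ for an appropriate unrestricted count $N_0(v,d)$ with $\sum_{v,d}N_0(v,d)=O(k|\cL_i|)$, yielding a total bound of $O(kh\gamma)|\cL_i|=o(|\cL_i|/h)$, well below the target $|\cL_i|/(4h)$. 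The delicate technical point is that $X_i(M)$ depends on $M$, so the $\cL_i$-conditional chain distribution is not literally uniform on $\cC(\b_n)$; if a clean factorisation loses too much at this step, the sharper exponential-decay bound of Lemma \ref{exponential} (or \ref{exponential2}) with $p\geq 2$ provides a $\gamma^p$-estimate that absorbs any constants lost to the conditioning.
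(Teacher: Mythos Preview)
Your construction of $X_{i+1}$ and the verification of items 1, 2, 4 and of the ``ratio loss'' $\text{Loss}_r\le |\cL_i|/(4h)$ are essentially the paper's Claim~1, and are fine. The gap is entirely in the handling of $\text{Loss}_c$.

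First, a bookkeeping error: with your definition $B_i(M)=\{v\in X_i(M):v\text{ is $d$-bad for some }d\}$, membership in $B_i(M)$ does not depend on the position of $v$ in any particular $Q$. Thus $\sum_{(M,Q)\in\cL_i}|Q\cap B_i(M)|=\sum_{d=1}^k\sum_{v\in B_i}|\cL_i(v,d)|$, summing over \emph{all} $d$ and all $v$ that are bad for \emph{some} $d'$, not just those $(v,d)$ with $v$ $d$-bad. Your regrouping ``the sum is at most $\sum_d\sum_{v\text{ $d$-lower-bad}}|\cL_i(v,d)|+\cdots$'' only covers the diagonal pairs and is not an upper bound.

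More seriously, even if one repairs the regrouping, the strategy ``$|\cL_i(v,d)|\le\gamma\cdot N_0(v,d)$ with $\sum N_0=O(k|\cL_i|)$'' cannot work as stated. Any natural unrestricted count $N_0(v,d)$ satisfies $\sum_{v,d}N_0(v,d)\asymp k|\cL_1|$, not $k|\cL_i|$; and since $|Y(M)|$ ranges up to $4\sqrt{n\ln n}$, one has $|\cL_1|=\sum_M\binom{|Y(M)|}{k}$ potentially of order $n!\,(n\ln n)^{k/2}$, dwarfing $|\cL_i|\asymp\epsilon n!/k$. A single factor of $\gamma\asymp n^{-1/2}\sqrt{\ln n}$ is nowhere near enough to beat this disparity. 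Moreover, the factorisation $M\mapsto(M\cap U(v),M\cap D(v))$ does not decouple the count, because $X_i(M)$ (hence the number of admissible $Q$'s) depends on the whole chain $M$, not just on its lower half; so even the inequality $|\cL_i(v,d)|\le\gamma N_0(v,d)$ is not available.

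The paper's approach to $\text{Loss}_c$ is genuinely different and is the missing idea. For each cutoff chain $M$, item~2 of the induction hypothesis gives $|Y(M)|\le 2|X_i(M)|\le 8kh\,|B_i(M)|$, so $|Y(M)|$ is controlled by the number of bad vertices on $M$. One then greedily extracts from $M$ a $d$-lower-bad (or upper-bad) string of length $2p$ with $p\ge |B_i(M)|/(2k^2)$, and records its \emph{profile} $J$, the sequence of positions (within $Y(M)$) of the string's vertices. The point is that $J\subseteq[4k^2pC]$ for $C=4kh$, so there are at most $\binom{4k^2pC}{2p}\le 2^{4k^2pC}$ possible profiles, and $\binom{|X_i(M)|}{k}\le 2^{|Y(M)|}\le 2^{4k^2pC}$ as well. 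Lemma~\ref{exponential} then says that for each fixed profile $J$, the proportion of full chains $M$ with $M[J]$ a bad string is at most $\gamma^p$. Summing $2^{4k^2pC}\cdot 2^{4k^2pC}\cdot\gamma^p$ over $p\ge1$ gives $\text{Loss}_c\le (4k/n^{1/3})n!$ for large $n$, an absolute bound independent of $|\cL_i|$. Your final sentence gestures at Lemma~\ref{exponential}, but without the profile encoding and the crucial use of item~2 to bound $|Y(M)|$ in terms of $p$, there is no way to make the $\gamma^p$ factor absorb the $\binom{|X_i(M)|}{k}$ weight.
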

\begin{proof}
We use induction on $i$. For the basis step,  for each $M\in \cC(\b_n)$, we let $X_1(M)=Y(M)$.
By Lemma \ref{many-marked-chains}, we have
\begin{equation} \label{L1}
|\cL_1|\geq (\epsilon/k) n!.
\end{equation}
So item 3 holds. There is nothing else to prove. For the induction step, let $i\geq 1$ and suppose
the functions $X_1,\ldots, X_i$ have been defined so that items 1,2,3,4 all hold. We want to define
$X_{i+1}$ to satisfy all the requirements.

For each $d\in [k]$ and each $v \in \wb$ that is $d$-lower-bad relative to $\cL_i$, we fix a corresponding $d$-lower-witness
$S_{v,d}$.  For each $d\in [k]$ and each $v\in \wb$ that is $d$-upper-bad relative to $\cL_i$, we
fix a corresponding $d$-upper-witness $T_{v,d}$.
To define $X_{i+1}$, we first classify those $M$ with $X_i(M)\neq \emptyset$ into two types. For
each $d\in [k]$, let $\b_{i,d}^-(M)$ denote
the set of vertices in $X_i(M)$ that are $d$-lower-bad relative to $M$ and $\cL_i$.
Let $\b_i^-(M)=\bigcup_{d=1}^k \b_{i,d}^-(M)$.
For each $d\in [k]$,
let $\b_{i,d}^+(M)$ denote
the set of vertices in $X_i(M)$ that are $d$-upper-bad relative to $M$ and $\cL_i$.
Let $\b_i^+(M)=\bigcup_{d=1}^k \b_{i,d}^+(M)$.
Let $\b_i(M)=\b_i^-(M)\cup \b_i^+(M)$.
Let $x(M)=|X_i(M)|$ and let   $b(M)=|\b_i(M)|$. Set $C=4kh$. Let
\begin{eqnarray*}
\cC_1&=&\{M\in \cC(\b_n): X_i(M)\neq \emptyset,\, \frac{b(M)}{x(M)}\leq \frac{1}{C}\}\\
\cC_2&=&\{M\in \cC(\b_n): X_i(M)\neq \emptyset, \,  \frac{b(M)}{x(M)}> \frac{1}{C}\}\\
\end{eqnarray*}
Now, we define $X_{i+1}$ as follows.
\begin{eqnarray*}
&&\mbox{If } X_i(M)=\emptyset \mbox{ or } M\in \cC_2, \mbox{ then let } X_{i+1}(M)=\emptyset\\
&& \mbox{Otherwise, } M\in \cC_1, \mbox{ and we let } X_{i+1}(M)=
X_i(M)\setminus \b_i(M)
\end{eqnarray*}

{\bf Claim 1.}
We have
\begin{enumerate}
\item
$\forall M\in \cC(\b_n)$, where $X_{i+1}(M)\neq \emptyset$, we have  $|X_{i+1}(M)|\geq (1-\frac{1}{C})|X_i(M)|\geq (1-\frac{1}{4kh})|X_i(M)|$.
\item
Each member of $\cL_{i+1}$ is good relative to $\cL_i$.
\item
$\sum_{M\in \cC_1}{|X_{i+1}(M)|\choose k}\geq (1-\frac{k}{C}) \sum_{M\in \cC_1}{|X_i(M)|\choose k}\geq (1-\frac{1}{4h}){|X_i(M)|\choose k}$.
\end{enumerate}
{\it Proof of Claim 1.}
Let $M\in \cC(\b_n)$ and suppose $X_{i+1}(M)\neq \emptyset$. Then $M\in \cC_1$.
By our definition of $\cC_1$, we have $|\b_i(M)|/|X_i(M)|\leq 1/C$. Since
$X_{i+1}(M)=X_i(M)\setminus \b_i(M)$,  item 1 follows immediately. The only members of $\cL_{i+1}$
have the form $(M,Q)$, where $M\in \cC_1$ and $Q\in {X_{i+1}(M)\choose k}$. Fix any such member $(M,Q)$.
Since $X_{i+1}(M)=X_i(M)\setminus \b_i(M)$, and $Q\in {X_{i+1}(M)\choose k}$,  $Q$ contains no vertex
that is either $d$-lower-bad or $d$-upper-bad relative to $M$  and $\cL_i$ for any $d\in [k]$. Hence $(M,Q)$ is good relative to $\cL_i$.   So item 2 holds.
 As in the definition,  let $b(M)=|\b_i(M)|$ and $x(M)=|X_i(M)|$. The number of $k$-subsets of $X_i(M)$ that
contain a member of $\b_i(M)$ is certainly at most
$$b{x-1\choose k-1}=\frac{bk}{x}{x\choose k}\leq \frac{k}{C}{x\choose k}.$$
Therefore, we have
$${|X_{i+1}(M)|\choose k}\geq {x\choose k}-\frac{k}{C}{x\choose k}=(1-\frac{k}{C}){|X_i(M)|\choose k}=(1-\frac{1}{4h}){|X_i(M)|\choose k}.$$
So item 3 (of Claim 1) holds. \qed

\medskip

{\bf Claim 2.}
For large $n$, we have $\sum_{M\in \cC_2} {|X_i(M)|\choose k} \leq \frac{4k}{n^{1/3}}\cdot n!$.

\medskip

{\it Proof of Claim 2.} We further partition $\cC_2$ into two subclasses. Let $\cC_2^-$ consist of
those $M\in \cC_2$ with $|\b_i^-(M)|\geq |\b_i(M)|/2=b(M)/2$ and let $\cC_2^+=\cC_2-\cC_2^-$.
For each $d\in [k]$, let $\cC_{2,d}^-$ consist of those $M\in\cC_2^-$ with $|\b_{i,d}^-(M)|\geq |\b_i^-(M)|/k$.
Clearly, $\cC_2^-=\bigcup_{d=1}^k \cC_{2,d}^-$. For each $d\in [k]$, we first bound $\sum_{M\in \cC_{2,d}^-(M)}
{|X_i(M)\choose k}$.

For each $M\in \cC_{2,d}^-$, we define a sequence $R_d^-(M)$,
called the {\it greedy $d$-lower-bad string generated by} $M$
relative to $\cL_i$, as follows. Scan $M$ from top to bottom. Let
$x_1$ be the first vertex in $\b_{i,d}^-(M)$ that we encounter.
Recall that this means  $x_1$ is $d$-lower-bad relative to $M$ and
$\cL$ and we have fixed a $d$-lower-witness  $S_{x_1,d}$ of $v$
(relative to $\cL_i$)  with $|S_{x_1,d}|\leq h$ and there is at least one
member $(M,Q)$ of $\cL_i(x_1,d)$. Since the members of $\cL_i$  on
$M$ form ${X_i(M)\choose k}$ and $\cL_i(x_1,d)\neq \emptyset$, in
particular the $k$ consecutive members of $X_i(M)$ with $x_1$
being the $d$-th one among them form a $Q$ with $(M,Q)\in
\cL_i(x_1,d)$. Since $x_1$ is  $d$-lower-bad relative to $\cL_i$,
$Q$ must intersect $D^*(x_1,S_{x_1,d})$, which takes place below
$x_1$. Let $y_1$ be the first member of $X_i(M)$ below $x_1$  that
lies in $D^*(x_1,S_{x_1,d})$. By our discussion above, $y_1$ is
among the $k-d$ members of $X_i(M)$ below $x_1$. After we
encounter $y_1$, we continue down $M$. If there are more vertices
in $X_i(M)$ that are $d$-lower-bad relative to $M$ and $\cL_i$,
then let $x_2$ denote the next vertex in $X_i(M)$ that is
$d$-lower-bad relative to $M$ and $\cL_i$. We then similarly
define $y_2$. We continue like this until we run out of vertices
in $X_i(M)$. Following our reasoning for the existence of $y_1$,
whenever an $x_i$ is defined, $y_i$ must exist and is within the
$k-d$ members of $X_i(M)$ below $x_i$. Suppose
$R_d^-(M)=(x_1,y_1,x_2,y_2,\ldots, x_p,y_p)$. By our procedure,
$p\geq \ce{|\b_{i,d}^-(M)|/k}$. Let $J$ be the increasing sequence
of $2p$ numbers in $[n]$ such that $M[J]=R_d^-(M)$. We denote $J$
by $P_d^-(M)$ and call it the {\it $d$-lower-bad profile} of $M$
relative to $\cL_i$. Now we organize the terms in $\sum_{M\in
\cC_{2,d}^-}{X_i(M)\choose k}$ by $|P_d^-(M)|$.  For convenience,
we will view the increasing sequence $P_d^-(M)$ simply as a subset
of $[n]$. Let $p$ be any positive integer. Consider $M\in
\cC_{2,d}^-$ with $|P_d^-(M)|=2p$. By item 2 of the induction
hypothesis,
$$\frac{|X_i(M)|}{|X_1(M)|}\geq (1-\frac{1}{4kh})^{i-1}\geq (1-\frac{1}{4kh})^h\geq 1-\frac{h}{4kh}>\frac{1}{2}.$$
So $|Y(M)|=|X_1(M)|\leq 2|X_i(M)|\leq 2|\b_i(M)|C\leq 4|\b_i^-(M)|C\leq 4k|\b_{i,d}^-(M)|\leq
4k^2pC$ (recall that $p\geq \frac{|\b_{i,d}^-(M)|}{k}$). Clearly the largest
number in $P_d^-(M)$ is no more than $|Y(M)|\leq 4k^2pC$.
So,  $P_d^-(M)\in {[4k^2pC]\choose 2p}$. Fix any $2p$-subset (increasing sequence) $J$ of $[4k^2pC]$.
By our definition of $P_d^-(M)$, if $P_d^-(M)=J$, then certainly $M[J]=R_d^-(M)$ forms a $d$-lower-bad string  relative to $\cL_i$ by the definition of  $R_d^-(M)$.
Thus
\begin{eqnarray*}
|\{ M\in \cC_{2,d}^-: P_d^-(M)=J\}| &\leq& |\{M\in \cC(\b_n): M[J] \mbox{ forms a $d$-lower-bad string relative to }\cL_i \}|\\
&\leq& (\frac{27h\sqrt{n\ln n}}{n})^p\cdot n!  \quad \quad \mbox{(by Lemma \ref{exponential})}
\end{eqnarray*}

So
$$|\{M\in \cC_{2,d}^-: |P_d^-(M)|=2p\}|\leq {4k^2pC\choose 2p}\cdot (\frac{27h\sqrt{n\ln n}}{n})^p\cdot n!\leq 2^{4k^2pC}(\frac{27h\sqrt{n\ln n}}{n})^p\cdot n!. $$
Also, for each $M\in \cC_{2,d}^-$ with $|P_d^-(M)|=2p$, we showed earlier that $|Y(M)|\leq 4k^2pC$. Hence
$${|X_i(M)|\choose k}\leq {|Y(M)\choose k}\leq 2^{|Y(M)|}\leq 2^{4k^2pC}.$$
So, the contribution to $\sum_{M\in \cC_{2,d}^-}{|X_i(M)|\choose k}$ from those $M\in \cC_{2,d}^-$ with $|P_d^-(M)|=2p$ is at most
$$2^{4k^2pC}\cdot 2^{4k^2pC}\cdot (\frac{27h\sqrt{n\ln n}}{n})^p\cdot n!\leq (\frac{1}{n^{1/3}})^p\cdot n!,$$
for sufficiently large $n$. Summing over all $p\geq 1$, we get
\begin{eqnarray*}
\sum_{M\in \cC^-_{2,d}}{|X_i(M)|\choose k}\leq \sum_{p=1}^\infty (\frac{1}{n^{1/3}})^p \cdot n!\leq \frac{2}{n^{1/3}} \cdot n!,
\end{eqnarray*}
for large $n$.  Summing over all $d\in [k]$, we get
\begin{eqnarray*}
\sum_{M\in \cC^-_2}{|X_i(M)|\choose k}\leq  \frac{2k}{n^{1/3}} \cdot n!,
\end{eqnarray*}

By a similar argument, we have
\begin{eqnarray*}
\sum_{M\in \cC^+_2}{|X_i(M)|\choose k}\leq \frac{2k}{n^{1/3}} \cdot n!,
\end{eqnarray*}
for large $n$. Hence
\begin{eqnarray*}
\sum_{M\in \cC_2}{|X_i(M)|\choose k}\leq \frac{4k}{n^{1/3}} \cdot n!,
\end{eqnarray*}
for large $n$. \qed

\medskip

{\bf Claim 3.} We have $$|\cL_{i+1}|\geq (\epsilon/k)n!(1-\frac{i+1}{2h}).$$

\medskip

{\it Proof of Claim 3.} By induction hypothesis,
$$|\cL_i|\geq (\epsilon/k)n!(1-\frac{i}{2h}).$$
By Claim 2, $\sum_{M\in \cC_2}{|X_i(M)|\choose k}\leq \frac{4k}{n^{1/3}}\cdot n!\leq (\epsilon/k)n!(\frac{1}{4h})$, for large $n$. So
$$\sum_{M\in \cC_1}{|X_i(M)|\choose k}\geq (\epsilon/k)n!(1-\frac{i}{2h} - \frac{1}{4h}).$$
By Claim 1 and our definition of $\cL_{i+1}$, we have
\begin{eqnarray*}
|\cL_{i+1}|&=&\sum_{M\in \cC_1}{|X_{i+1}(M)|\choose k}\geq (1-\frac{1}{4h})\sum_{M\in \cC_1} {|X_i(M)|\choose k}\\
&\geq&(\epsilon/k)n!(1-\frac{i}{2h}-\frac{1}{4h})(1-\frac{1}{4h})\\
&\geq&(\epsilon/k)n!(1-\frac{i+1}{2h})
\end{eqnarray*}
\qed

\medskip

So item 3 of the theorem holds. This completes the induction step and the proof.
\end{proof}

\section{Proof of Theorem \ref{upper2}}

Now, we are ready to prove Theorem \ref{upper2}.  We keep all the notations from previous sections.
Let $\cL_1\supseteq \cL_2\supseteq \ldots \supseteq \cL_h$ be the sequence of families of $k$-marked chains
we obtained in Theorem \ref{nested}.
 We  define a sequence of subposets $H_1, H_2,\ldots$ of $H$ as follows.
Let $H_1=H$. Recall that $H_1$ is $k$-saturated. Suppose $H_1$ is not a chain. Then by Lemma \ref{interval-removal}, $H_1$ contains a chain interval $I_1=[v_1,u_1]$ or $[u_1,v_1]$, where $v_1$ is a leaf in $D(H_1)$
and $H_2=H_1\setminus (I - u_1)$ is still $k$-saturated and $D(H_2)$ is a tree.
If $H_2$ is a chain, then we terminate. Otherwise, $H_2$ contains
a chain interval $I_2=[v_2,u_2]$ or $[u_2,v_2]$  such that $H_3=H_2\setminus (I_2-u_2)$ is $k$-saturated.
We continue like this until the current subposet, say $H_q$, is just a $k$-chain.
Clearly $q\leq h$. We prove the following proposition, which implies Theorem \ref{upper2}.
Given a set $W$ of vertices in $\b_n$, we view $W$ as a family of subsets of $[n]$ and define
 the {\it sublattice} of $\b_n$ induced by $W$, denoted by
$\b_n[W]$,  to be $(W,\subseteq)$. Clearly, $\b_n[W]$ is an induced subposet of $\b_n$.

\begin{proposition}
There exist subsets $W_1\supseteq W_2\supseteq \ldots \supseteq W_q$ of $\b_n$ such that
\begin{enumerate}
\item $\forall i\in [q]$, $\b_n[W_i]=H_i$. (Hence, we will treat $W_i$ as $V(H_i)$.)
\item  $\forall i\in [q], \forall v\in W_i=V(H_i)$ if $v$ is at level $d$ of $H_i$ (from the top) then $\cL_i(v,d)\neq \emptyset$.
\end{enumerate}
\end{proposition}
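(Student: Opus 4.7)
The plan is to prove the proposition by downward induction on $i$, from $i=q$ down to $i=1$, constructing the nested chain $W_q \subseteq W_{q-1} \subseteq \cdots \subseteq W_1$.

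For the base case $i=q$, the poset $H_q$ is a $k$-chain, and $|\cL_q| \geq (\epsilon/k)n!(1-q/(2h))>0$ by Theorem \ref{nested}. Pick any $(M,Q) \in \cL_q$ and let $W_q$ be the vertex set of $Q$. Then $\b_n[W_q]$ is a $k$-chain, hence isomorphic to $H_q$, and $(M,Q) \in \cL_q(v,d)$ for each $v \in W_q$ at position $d$ of $Q$, so both items hold.

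For the inductive step, suppose $W_{i+1}$ has been constructed. Let $I_i \setminus \{u_i\} = \{a_1, \ldots, a_t\}$ be the chain added back in passing from $H_{i+1}$ to $H_i$, let $w \in W_{i+1}$ be the vertex corresponding to $u_i$, and let $d_0$ be the level of $u_i$ (the same in $H_i$ and $H_{i+1}$, since any maximal chain of $H_{i+1}$ through $u_i$ has length $k$ and remains maximal in the $k$-saturated $H_i$). Consider the case $I_i = [u_i, v_i]$, so $u_i < a_1 < \cdots < a_t = v_i$ with $a_j$ at level $d_0 - j$; the case $I_i = [v_i, u_i]$ is symmetric with $D^*$ in place of $U^*$ and position $d_0+j$ in place of $d_0-j$. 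Set $T_0 = W_{i+1} \setminus D(w)$, so $T_0 \cap D(w) = \emptyset$ and $|T_0| \leq |W_{i+1}| \leq h$. By induction $\cL_{i+1}(w, d_0) \neq \emptyset$, and every member of $\cL_{i+1}$ is good relative to $\cL_i$ by Theorem \ref{nested}; so Proposition \ref{disjoint} yields some $(M', Q') \in \cL_i(w, d_0)$ with $Q' \cap U^*(w, T_0) = \emptyset$. For $j=1,\ldots,t$, let $b_j$ be the element of $Q'$ at position $d_0 - j$, and put $W_i = W_{i+1} \cup \{b_1,\ldots,b_t\}$. Then $(M',Q') \in \cL_i(b_j, d_0-j)$ establishes item 2 for the new vertices.

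The crucial step is verifying $\b_n[W_i] = H_i$, i.e., that each $b_j$ has the correct comparability with every $w' \in W_{i+1}$. Here the tree hypothesis on $D(H_i)$ is exactly what we need: in a poset whose Hasse diagram is a tree, $x \leq y$ holds if and only if the unique tree path from $x$ to $y$ in the Hasse diagram is monotone. Applied to any $x \in H_{i+1} \setminus \{u_i\}$ and $a_j$, the path must enter the chain $I_i$ at $u_i$, so it is monotone exactly when $x < u_i$; otherwise it reverses at $u_i$ and $x$ is incomparable with $a_j$. Translating to $\b_n$: if $w' < w$ then $w' < w < b_j$, matching $x < a_j$; if $w' \in T_0$ then $b_j \in (U(w)\setminus\{w\})\cap\wb$ combined with $Q' \cap U^*(w,T_0) = \emptyset$ forces $b_j \notin U(T_0) \cup D(T_0)$, so $b_j$ is incomparable with $w'$, matching $x$ incomparable with $a_j$. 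Finally, the chain structure of $Q'$ gives $w < b_1 < \cdots < b_t$ in $\b_n$, matching $u_i < a_1 < \cdots < a_t$ in $H_i$. Thus $\b_n[W_i]$ is isomorphic to $H_i$ under the extended bijection, completing the induction.
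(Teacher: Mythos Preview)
Your proof is correct and follows essentially the same approach as the paper: reverse induction on $i$, with the base case handled by $\cL_q\neq\emptyset$ and the inductive step using Proposition~\ref{disjoint} (applied to a member of $\cL_{i+1}(w,d_0)\subseteq\cL_i$, which is good relative to $\cL_i$) to find a $k$-marked chain in $\cL_i$ avoiding the relevant forbidden neighborhood, then embedding $I_i\setminus\{u_i\}$ along that chain. Your write-up is in fact more explicit than the paper's in justifying that $\b_n[W_i]=H_i$ via the tree-path argument and in noting that the level $d_0$ of $u_i$ is unchanged between $H_i$ and $H_{i+1}$; the only small point left implicit (as in the paper) is that item~2 persists for the old vertices because $\cL_{i+1}\subseteq\cL_i$.
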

\begin{proof}
We use reverse induction on $i$. For the basis step, let $i=q$. We know that $H_q$ is just a $k$-chain.
By Theorem \ref{nested}, $|\cL_q|\geq (\epsilon/k)n!(1-\frac{q}{2h})>0$. So there exists $(M,Q)\in \cL_q$.
We embed $H_q$ using $Q$. Let $W_q=V(Q)$. Clearly, items 1 and 2 both hold.
For the induction step, let $i\leq q-1$. Suppose we have defined $W_{i+1}, \ldots, W_q$ that satisfy
all the requirements. Recall that $H_{i+1}=H_i\setminus (I_i-u_i)$, where $I_i=[v_i,u_i]$ or $[u_i,v_i]$ is a chain
interval in $H_i$. Without loss of generality, we may assume $I_i=[v_i,u_i]$, which would put $v_i$ at level $k$ since $v_i$
is a leaf in $D(H_i)$ and each leaf is at level $1$ or $k$. (The case where $I_i=[u_i,v_i]$ can be handled similarly.)
Suppose $u_i$ is
at level $d$ from the top in $H_{i+1}$. By item 2 of the induction
hypothesis, $\cL_{i+1}(u_i,d)\neq \emptyset$. Let $(M,Q)\in \cL_{i+1}(u_i,d)$. Then $u_i$
is the $d$-th vertex of $Q$ (from the top). By Theorem \ref{nested}, $(M,Q)$ is good relative to $\cL_i$.
Let $S=W_{i+1}\setminus U(u_i)$. In other words, $S$ is the
set of vertices in $H_{i+1}$ that are not ancestors of $u_i$. Since $|S|\leq h$, by Proposition \ref{disjoint},
there exists a member $(M',Q')\in \cL_i(u_i,d)$ that is disjoint from $D^*(u_i, S)$. We can embed $I_i-u_i$
using the portion $Q^*$ of $Q'$ below $u_i$. The newly embedded vertices, by design, are not in $D^*(u_i,S)$ and hence are not related to any vertex in $S$. (They are, however, descedants of $u_i$ and hence are still descendants
of the ancestors of $u_i$ in $W_{i+1}=V(H_{i+1})$.) Let $W_i=W_{i+1}\cup V(Q^*)$. Since
$\b_n[W_{i+1}]=H_{i+1}$, it follows from our discussion above that $\b_n[W_i]=H_i$.
Furthermore, because of the existence of $(M',Q')$ it is easy to see that the newly embedded vertices
(namely those in $Q^*$) still satisfy item 2 of the  theorem.
This completes the induction step and the proof.
\end{proof}


\section{Concluding remarks}

\subsection{Comments on the approach}
Even though our approach follows that of Bukh,  we needed to use several key new ideas.
In Bukh's argument, it is crucial to assume that on
each full chain the number of members of $\cF$ is bounded. Indeed, if some full chain contains
$h$ members of $\cF$ then $\cF$ contains an $h$-chain, which already contains $H$ as a subposet.  However,
for the induced version, this is no longer the case. One can have an unbounded number of  members of $\cF$ on a full chain without forcing
an induced $H$. To overcome this difficulty, we consider two types of full chains. In one type of full chains
 the number of bad members of $\cF$ is negligible compared to the number of members of $\cF$. In a second type of full
chains, the number of bad members of $\cF$ is comparable to the number of members of $\cF$. For the second type, the key
observation is that the number of $k$-marked chains on type $2$ full chains decreases exponentially fast as the number
of bad members of $\cF$ that lie on the full chain. This still allows us to limit the total number of bad $k$-marked chains
and build our nested sequence of dense families of $k$-marked chains, which is then used to embed $H$ iteratively.
Another major departure from Bukh's approach is that we no longer insist on using entire $k$-marked chains to embed maximal chains of $H$. Rather, we use $k$-marked chains to locate good vertices to embed $H$, while preserving the levels of vertices.

\subsection{Induced versus non-induced}
We showed that when $H$ is a poset whose Hasse diagram is a tree
$La(n,H)$ and $La^*(n,H)$ are asymptotically equal, both
asymptotic to $(k-1){n\choose \fl{n/2}}$, where $k$ is the height
of $D(H)$. For other posets though, $La^*(n,H)$ can be very
different from $La(n,H)$. For instance, since $La(n,K_{r,s})\leq
(2+o(1)){n\choose \fl{n/2}}$, for any two level poset $H$ we have
$La(n,H)\leq (2+o(1)){n\choose \fl{n/2}}$. However, we now show
that for every fixed $m$, there exists a two level poset $H_m$
satisfying $La^*(n,H_m)\geq (m-1-o(1)){n\choose \fl{n/2}}$.
Specifically,  let $H_m$ be the two level poset consisting of
$x_1,x_2,\ldots, x_m$ at level one and $y_1,y_2,\ldots, y_m$ at
level two. For each $i\in [m]$, let $x_i\leq y_j$ for
$j=i,i+1,\ldots, m$. Suppose $\cG\subseteq \b_n$ is a family that
contains $H_m$ as an induced subposet with members $A_1,\ldots,
A_m$ playing the roles of $x_1,\ldots, x_m$, respectively and
members  $B_1,\ldots, B_m$ playing the role of $y_1,\ldots,y_m$,
respectively.  For each $i\in [m]$, let $S_i=\bigcap_{j=i}^m B_i$.
Note that $S_m\supseteq S_{m-1}\supseteq \ldots \supseteq S_1$.
Also, by our assumption $\forall i\in [m], A_i\subseteq S_i$ and
if $i\geq 2$ then also $A_i\not\subseteq S_{i-1}$. In particular,
this implies that $S_1,\ldots,S_m$ must be distinct sets. So
$|S_m|-|S_1|\geq m-1$. It follows that $|B_m|-|A_1|\geq m-1$. Now,
let $\cF\subseteq \b_n$ be a family that consists of the middle
$m-1$ levels of $\b_n$. Since the cardinalities of any two members
of $\cF$ differ by at most $m-2$,, $\cF$ does not contain $H_m$ as
an induced subposet. Since $|\cF|=(m-1-o(1)){n\choose \fl{n/2}}$,
we have $La^*(n,H_m)\geq (m-1-o(1)){n\choose \fl{n/2}}$.


\end{document}